\numberwithin{paragraph}{section}
\numberwithin{equation}{section}
\newtheorem{satz}{Theorem}[section]
\newtheorem{lem}[satz]{Lemma}
\newtheorem{prop}[satz]{Proposition}
\newtheorem{kor}[satz]{Corollary}
\theoremstyle{definition}
\newtheorem{defn}[satz]{Definition}
\newtheorem{bem}[satz]{Remark}
\newtheorem{Ex}[satz]{Example}
\newtheorem{Const}[satz]{Construction}
\newtheorem*{ack}{Acknowledgements}
\newtheorem{theointro}{Theorem}
\newcommand{\Z}{\mathbb{Z}}
\newcommand{\R}{\mathbb{R}}
\newcommand{\C}{\mathbb{C}}
\newcommand{\T}{\mathbb{T}}
\newcommand{\G}{\mathbb{G}}
\newcommand{\Xcal}{\mathcal{X}}
\newcommand{\Ocal}{\mathcal{O}}
\newcommand{\Linear}{\mathbb{L}}
\newcommand{\CC}{\mathcal{C}}
\newcommand{\Xan}{X^{\an}}
\newcommand{\abs}{\vert \,.\, \vert}
\newcommand{\Htilde}{\widetilde{\mathscr{H}(x)}}
\newcommand{\Ktilde}{\tilde{K}}
	\DeclareMathOperator{\an}{an}
	\DeclareMathOperator{\trop}{trop}
	\DeclareMathOperator{\Hom}{Hom}
	\DeclareMathOperator{\Spec}{Spec}
	\DeclareMathOperator{\Trop}{Trop}
	\DeclareMathOperator{\supp}{supp}
	\DeclareMathOperator{\divisor}{div}
	\DeclareMathOperator{\Pic}{Pic}
	\DeclareMathOperator{\inn}{in}
	\DeclareMathOperator{\red}{red}
	\DeclareMathOperator{\Monoids}{Monoids}
	\DeclareMathOperator{\val}{val}
	\DeclareMathOperator{\HH}{H}
	\DeclareMathOperator{\Div}{Div}
	\DeclareMathOperator{\Hscr}{\mathscr{H}}	
	\DeclareMathOperator{\Prin}{Prin}
	\DeclareMathOperator{\trdeg}{trdeg}
	\DeclareMathOperator{\fin}{fin}
\newcommand{\Pbb}{\mathbb{P}}
\DeclareMathOperator{\XS}{\mathcal{X}}
\DeclareMathOperator{\YS}{\mathcal{Y}}
\DeclareMathOperator{\A}{\mathbb{A}}
\def\quotient#1#2{\raise0.75ex\hbox{$\,#1$}\big/\lower0.75ex\hbox{$#2\,$}}
\title[Smooth and fully faithful tropicalizations for Mumford curves]{Constructing smooth and fully faithful tropicalizations for Mumford curves}
\author[P.~Jell]{Philipp Jell}
\address{Philipp Jell, 
Universit\"at Regensburg,
93040 Regensburg}
\email{philipp.jell@ur.de}
\thanks{The author was supported by the DFG Research Fellowship  JE 856/1-1 and 
by the Institute Mittag-Leffler and the ``Vergstiftelsen''.}
\begin{document}
\begin{abstract}
The tropicalization of an algebraic variety $X$ is a combinatorial shadow of $X$, 
which is sensitive to a closed embedding of $X$ into a toric variety. 
Given a good embedding, the tropicalization can provide a lot of information about $X$.
We construct two types of these good embeddings for Mumford curves:  
fully faithful tropicalizations, which are embeddings such that the tropicalization admits 
a continuous section to the associated Berkovich space $\Xan$ of $X$, and 
smooth tropicalizations. 
We also show that a smooth curve that admits a smooth tropicalization is 
necessarily a Mumford curve. 
Our key tool is a variant of a lifting theorem for rational functions on metric graphs.

\bigskip

\noindent
MSC: Primary: 14T05; Secondary: 14G22, 32P05

\bigskip

\noindent
Keywords: Tropical geometry, smooth tropical curves, Mumford curves, extended skeleta, 
faithful tropicalization
\end{abstract}

\maketitle 

\section{Introduction}

Let $K$ be a field that is algebraically closed and complete 
with respect to a non-archimedean non-trivial absolute value. 
Given a closed subvariety $X$ of a toric variety $Y$ over $K$, 
one can associate a so-called tropical variety $\Trop(X)$ 
which is a polyhedral complex.
Note however, that $\Trop(X)$ is not an invariant of $X$, but depends on the embedding into $Y$. 

In good situations, $\Trop(X)$ can retain a lot of information about $X$.
Let us mention here work by Katz, Markwig and Markwig 
on the $j$-invariant of elliptic curves \cite{KMM1, KMM2} and 
work by Itenberg, Mikhalkin, Katzarkov and Zharkov 
on recovering Hodge numbers in degenerations of complex projective 
varieties \cite{IKMZ}.

In the latter work, a smoothness condition for tropical varieties in arbitrary codimension appears: 
a tropical variety is called \emph{smooth} if it is locally isomorphic to the Bergman fan of a matroid. 
(See Definition \ref{defn smooth} for an equivalent definition for curves.)
For tropical hypersurfaces, this is equivalent to the associated subdivision of the Newton polytope 
being a primitive triangulation, which is the definition of smoothness that is generally used for tropical hypersurfaces
\cite[Remark p. 24]{IKMZ}. 

The definition in \cite{IKMZ} is motivated by complex analytic geometry. 
A complex variety is smooth if it is locally isomorphic to open subsets of $\C^n$ in the analytic topology. 
Bergman fans of matroids are the local models for linear spaces in tropical geometry, 
thus it makes sense to call a tropical variety smooth if it is locally isomorphic to the Bergman fan of a matroid.

This smoothness condition has been shown to imply many tropical analogues of classical theorems
from complex and algebraic geometry, for example intersection theory, 
Poincar\'e duality and a Lefschetz $(1,1)$-theorem \cite{Shaw:IntMat, JSS, JRS}.

In this paper, we investigate the question for which smooth projective curves 
there exist closed embeddings $\varphi$ into toric varieties
such that $\Trop_{\varphi}(X) := \Trop(\varphi(X))$ is smooth. 
The answer turns out to be Mumford curves (see Definition \ref{defn Mumford}). 
Indeed, we show that for these curves 
we can ``repair'' any given embedding by passing to a refinement 
(see Definition \ref{defn refinement} for a definition of refinement).

\begin{theointro} [Theorem \ref{main theorem}, Theorem \ref{main theorem II}] \label{thmA}
Let $X$ be a smooth projective curve of positive genus. 
Then the following are equivalent:
\begin{enumerate}
\item
$X$ is a Mumford curve. 
\item
There exists a closed embedding $\varphi \colon X \to Y$ for a toric variety $Y$ that meets the dense torus
such that $\Trop(\varphi(X))$ is a smooth tropical curve.
\item
Given a closed embedding $\varphi \colon X \to Y$ of $X$ into a toric variety $Y$ that meets the dense torus,
there exists a refinement $\varphi' \colon X \to Y'$ of $\varphi$  
such that $\Trop(\varphi'(X))$ is a smooth tropical curve. 
\end{enumerate}
\end{theointro}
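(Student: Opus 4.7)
The plan is to prove the cycle $(3) \Rightarrow (2) \Rightarrow (1) \Rightarrow (3)$, with the final implication carrying the substance. The implication $(3) \Rightarrow (2)$ is immediate, since a refinement $\varphi'$ of $\varphi$ is itself a closed embedding into a toric variety meeting the dense torus.

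For $(2) \Rightarrow (1)$, I would fix an embedding $\varphi$ with $\Trop_\varphi(X)$ smooth. The tropicalization map $\trop_\varphi \colon \Xan \to \Trop_\varphi(X)$ factors through a (possibly extended) skeleton $\Sigma$ of $X$, and the genus formula for $\Xan$ expresses $g(X)$ as the first Betti number of $\Sigma$ plus the sum of the genera of the vertices of $\Sigma$. The strategy is to show that smoothness of $\Trop_\varphi(X)$ forces every vertex of $\Sigma$ to have genus zero. Concretely, the local matroidal structure forces $\Trop_\varphi(X)$ to already have first Betti number equal to $g(X)$; since this number is bounded above by the first Betti number of $\Sigma$, which in turn is bounded above by $g(X)$, all inequalities must be equalities and every vertex genus must vanish. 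Hence $X$ is a Mumford curve.

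For $(1) \Rightarrow (3)$, I would fix a Mumford curve $X$ with canonical skeleton $\Sigma$ (all vertices of genus zero) and a given embedding $\varphi \colon X \to Y$ meeting the dense torus. Write $F_\varphi \colon \Sigma \to \Trop_\varphi(X)$ for the restriction of $\trop_\varphi$, a piecewise integer affine surjection. Smoothness of $\Trop_\varphi(X)$ at a point can fail because the local outgoing primitive directions, while summing to zero by balancing, do not form the rays of a matroidal (tropical linear) fan. The plan is to repair each such failure by adjoining a rational function to the embedding. Near a problematic point, one constructs a tropical rational function $F$ on $\Sigma$ whose slopes along the outgoing tangent directions supply the missing lattice data needed for a matroidal fan. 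The paper's variant of the lifting theorem for rational functions on metric graphs, whose hypothesis is precisely the Mumford condition, then promotes $F$ to an algebraic rational function $f$ on $X$ satisfying $\trop(f)|_\Sigma = F$. Adjoining $f$ as a new toric coordinate to $\varphi$ yields a refinement whose tropicalization is smooth at the image of the chosen point.

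The main obstacle is \emph{global coherence}: adjoining a coordinate to repair one vertex redefines the tropicalization everywhere, and one must ensure that smoothness is not destroyed at vertices already in good shape. The way I would address this is to observe that smoothness is preserved under adjoining any further coordinate whose slopes at an already-smooth vertex are compatible with the existing matroidal fan; by building such compatibility constraints into the lifting step at each iteration, and using the finiteness of the vertex set of $\Sigma$, one obtains after finitely many refinements an embedding $\varphi'$ whose tropicalization is smooth everywhere.
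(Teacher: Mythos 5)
Your architecture $(3)\Rightarrow(2)\Rightarrow(1)\Rightarrow(3)$ matches the paper's, and your sketch of $(1)\Rightarrow(3)$ is essentially the paper's argument: first arrange full faithfulness, then use the lifting theorem (via break divisors and pillar points, i.e.\ Corollary \ref{lifting corollary}) to produce, for each singular vertex, rational functions whose tropicalizations supply the missing slopes, adjoin them as coordinates, and check (Lemma \ref{less singular points}) that already-smooth vertices stay smooth, so that induction on the number of singular vertices terminates. That part is fine as an outline.

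The genuine gap is in $(2)\Rightarrow(1)$. Your argument hinges on the claim that ``the local matroidal structure forces $\Trop_\varphi(X)$ to already have first Betti number equal to $g(X)$,'' but no reason is given, and this claim is essentially the entire content of the implication. Local smoothness is a condition at each vertex and does not by itself control the global topology of the image, nor its relation to $g(X)$; the only general statements of this type in the literature (e.g.\ \cite[Corollary 2]{IKMZ}) require extra hypotheses and are exactly the kind of deep input you would need to cite. Your auxiliary inequality $b_1(\Trop_\varphi(X))\le b_1(\Sigma)$ is also not automatic: $\Trop_\varphi(X)$ is the image of $\Sigma(\varphi)$ under a piecewise linear map which a priori may identify points and create new cycles, so the first Betti number can go up, not just down. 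Ruling this out (i.e.\ proving that smoothness forces $\trop|_\Sigma$ to be injective) is itself Theorem \ref{smooth implies ff} of the paper, and its proof is not combinatorial: it goes through initial degenerations. The paper's actual route for $(2)\Rightarrow(1)$ is local-to-global on the level of special fibers: for each $w\in\Trop(X)\cap N_\Lambda$ the local cone of $\Trop(X)$ at $w$ is the tropicalization of the initial degeneration $\inn_w(X)$ over the trivially valued residue field; by the Katz--Payne theorem a curve in a torus whose tropicalization is a smooth (matroidal) fan is smooth and rational; hence every $\inn_w(X)$ is smooth and rational, so every type II point of $\Xan$ has genus $0$, and $X$ is a Mumford curve by Remark \ref{bem Mumford}. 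Without this (or an equivalent substitute justifying your Betti number claims), your $(2)\Rightarrow(1)$ does not go through.
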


Denote by $\Xan$ the Berkovich analytification of $X$ \cite{BerkovichSpectral}.
We give alternative characterizations of Mumford curves in terms of $\Xan$ in Remark \ref{bem Mumford}. 
Theorem \ref{thmA}, specifically the equivalence of i) and ii), 
may be viewed as an alternative characterization that is purely tropical.

Payne showed in \cite[Theorem 4.2]{Payne} that we have a homeomorphism 
\begin{align}
\Xan = \varprojlim_{\varphi \colon X \to Y} \Trop_{\varphi}(X).
\end{align}
Theorem \ref{thmA} shows that if $X$ is a Mumford curve
we can let the limit on the right hand side as well run only over closed embeddings $\varphi$ 
such that $\Trop_{\varphi}(X)$ is a smooth tropical curve, 
meaning the smoothness on the left hand side is reflected on the right hand side. 

Another often used property of tropicalizations is faithfulness. 
For curves this means that given a finite skeleton $\Gamma$ of $\Xan$, one requires that 
$\varphi_{\trop} := \trop \circ \varphi^{\an}$ is a homeomorphism from $\Gamma$ onto its image, 
preserving the piecewise linear structure. 
Existence of faithful tropicalizations was proved by Baker, Payne and Rabinoff for curves 
and generalized to higher dimension by Gubler, Rabinoff and Werner \cite{BPR, GRW}. 
For further work on faithful tropicalizations see for example \cite{CuetoMarkwig, Manjunath, KY, Wagner}.

Baker, Payne and Rabinoff also introduced so-called completed extended skeleta for curves. 
For a smooth projective curve $X$, these are metric subgraphs $\Sigma$ of $\Xan$, 
potentially with edges of infinite length, 
that come with a canonical retraction $\tau \colon \Xan \to \Sigma$. 
Given a closed embedding $\varphi \colon X \to Y$ for $Y$ a toric variety with dense torus $T$, there exists 
an \emph{associated complete skeleton}
$\Sigma(\varphi)$, which has the property that 
$\varphi_{\trop}$ factors through the retraction $\tau \colon \Xan \to \Sigma(\varphi)$ 
(see Definition \ref{defn minimal skeleton}).
Denote by $X^\circ := \varphi^{-1}(T)$. 
We call $\varphi_{\trop}$ \emph{fully faithful} if 
$\varphi_{\trop}$ maps $\Sigma(\varphi)$ homeomorphically onto its image and is an 
isometry when restricted to $\Sigma(\varphi) \cap X^{\circ, \an}$.
Note that this is much stronger than a faithful tropicalization, 
since by definition the image of $\Sigma(\varphi)$ is $\Trop_{\varphi}(X)$.

We prove the following fully faithful tropicalization result. 

\begin{theointro} [Theorem \ref{prop edge}] \label{thmB}
Let $X$ be a Mumford curve and $\varphi \colon X \to Y$ a closed embedding into 
a toric variety $Y$ that meets the dense torus. 
Then there exists a refinement $\varphi'$ of $\varphi$
that is fully faithful. 
\end{theointro}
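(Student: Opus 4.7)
\emph{Setup and reformulation.} The strategy is to refine $\varphi$ by adjoining rational functions on $X$ as new coordinates, turning each failure of full faithfulness into a combinatorial problem on the skeleton that can be cured by the variant lifting theorem advertised in the abstract. Write $\Sigma := \Sigma(\varphi)$ and $\Sigma^{\fin} := \Sigma \cap X^{\circ, \an}$. Then $\varphi_{\trop}$ factors as $F \circ \tau$ with $F \colon \Sigma \to \Trop_\varphi(X)$ integer-affine on each edge. Full faithfulness of $\varphi$ is equivalent to two conditions: (i) $F$ is injective on $\Sigma$, and (ii) the slope of $F$ along every finite edge of $\Sigma^{\fin}$ has absolute value one. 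Both conditions fail only on a finite combinatorial datum readable off from $F$.

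\emph{Key step: adjoining lifted functions.} For each finite edge $e$ along which $F$ has slope of absolute value different from $1$, and for each pair of distinct points of $\Sigma$ that $F$ identifies, I would construct a piecewise linear function on $\Sigma$ with integer slopes, satisfying the balancing condition at interior vertices, that cures the defect --- respectively, of slope $\pm 1$ along $e$, or separating the offending pair. Since $X$ is a Mumford curve, the variant lifting theorem promotes any such PL datum to a rational function on $X$ whose tropicalization, restricted to $\Sigma$, reproduces it. Let $g_1, \ldots, g_N$ be the resulting finite collection.

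\emph{Assembly and verification.} Set $\varphi' := (\varphi, g_1, \ldots, g_N) \colon X \to Y' := Y \times (\mathbb{P}^1)^N$, with dense torus $T \times \mathbb{G}_m^N$. Since $\varphi$ is a closed embedding and each $g_i$ is regular away from a finite set, $\varphi'$ is again a closed embedding meeting the dense torus, and the projection $Y' \to Y$ exhibits $\varphi'$ as a refinement of $\varphi$. The skeleton $\Sigma(\varphi') \supseteq \Sigma$, possibly further subdivided, carries a new tropicalization which by construction has slope $\pm 1$ on every finite edge and is injective, hence is fully faithful.

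\emph{Main obstacle.} The principal difficulty is that adjoining a function $g_i$ to cure one defect can create new ones: the subdivision of $\Sigma$ introduced by $\trop(g_i)$ produces new edges on which the slopes must still be controlled, and a new coordinate can introduce new point-identifications elsewhere. The argument must therefore either choose each $g_i$ with tropicalization sufficiently concentrated near its intended edge to limit side effects, or iterate the construction and verify termination by a monovariant counting uncured defects. Either route depends crucially on the flexibility of the lifting theorem, which is precisely what the Mumford hypothesis provides.
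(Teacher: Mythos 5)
Your overall strategy---adjoin rational functions produced by the lifting theorem as new $\Pbb^1$-coordinates to cure slope and injectivity defects---is the same as the paper's, but as written the proposal has two genuine gaps. First, the lifting theorem does not ``promote any such PL datum to a rational function whose tropicalization, restricted to $\Sigma$, reproduces it.'' What the Mumford hypothesis buys (via the Baker--Rabinoff argument, Proposition \ref{prop BR lifting}) is Corollary \ref{lifting corollary}: given $D$ with $\tau_*D$ principal on $\Gamma$, one gets $f$ with $\divisor(f) = D + \sum_i(x_{i,1}+x_{i,4}) - \sum_i(x_{i,2}+x_{i,3})$, where the $2g$ auxiliary points retract to prescribed pillar points on edges forming the complement of a spanning tree. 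So $\log\vert f\vert\big\vert_{\Sigma(\varphi)}$ is your prescribed PL function \emph{plus} $g$ bump functions supported on the pillar intervals $[p_{i,1},p_{i,4}]$. These unavoidable correction terms are precisely the ``side effects'' you flag, and they cannot be made arbitrarily small or absent; they must be placed and then controlled.

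Second, you name the main obstacle (each new coordinate creates new edges and new potential identifications) but leave it unresolved, offering only ``either concentrate supports or iterate with a monovariant.'' This is where the actual work lies, and the paper's resolution has several ingredients you would need: (a) first invoke \cite[Theorem 1.1]{BPR} to replace $\varphi$ by a refinement for which $\varphi_{\trop}\vert_\Gamma$ is already an isometry, so only the finitely many edges of $\Sigma(\varphi)$ outside the finite skeleton $\Gamma$ need new functions (each gets slope exactly $1$, not merely $\pm1$, via Constructions \ref{construction finite edge} and \ref{construction infinite edge}); (b) choose the pillar intervals pairwise disjoint and with $\varphi_{\trop}$-images disjoint from $\varphi_{\trop}(e)$; (c) observe that every newly created edge is an infinite ray of stretching factor $1$ automatically, because the lifted functions are faithful and hence have simple zeros and poles (Lemma \ref{lem new edges})---this kills the ``new slope defects'' worry; and (d) prove global injectivity by an explicit case analysis on $E' = \{e : F_e(z_1)\neq 0\}$ using a partial order on the edges outside $\Gamma$ and the support formula $\supp(F_e) = \bigcup_{e'\geq e} e' \cup \bigcup_i [p_{i,1}^e,p_{i,4}^e]$. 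Without (b)--(d), or an equivalent termination/injectivity argument, the proposal does not yet constitute a proof.
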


As a direct consequence of the fact that $\varphi'$ is fully faithful, 
we obtain a continuous section 
$s \colon \Trop_{\varphi'}(X) \to \Xan$ of $\varphi'_{\trop}$ by 
composing the inverse of $\varphi'_{\trop}|_{\Sigma(\varphi')}$ with the inclusion of $\Sigma(\varphi')$ into $\Xan$ 
(see Corollary \ref{cor section}). 
Such sections, though only defined on subsets of $\Trop_{\varphi}(X)$, 
were also constructed in \cite[Theorem 5.24]{BPR} and \cite[Theorem 8.15]{GRW2}.

For reader interested in effective bounds on the dimensions of the ambient toric varieties, let us mention \cite{GunnJell}, 
where Gunn and the author construct fully faithful tropicalizations in ambient dimension 3, 
and also give bounds on the ambient dimensions for smooth tropicalizations. 

We prove Theorem \ref{thmB} as a first step to prove Theorem \ref{thmA}, 
more precisely that i) implies iii) therein. 
Our techniques to prove these results  
are based on 
the following lifting theorem for rational functions on metric graphs, 
which is a variant of a theorem by Baker and Rabinoff \cite[Theorem 1.1]{BRab}.
The relevant notions are recalled in Section \ref{functions and divisors}. 

\begin{theointro} [Theorem \ref{lifting theorem}] \label{thmC}
Let $X$ be a Mumford curve and $\Gamma$ be a finite skeleton with retraction $\tau$. 
Let $D \in \Div(X)$ be a divisor of degree $g$ and 
let $B = p_1 + \dots + p_g \in \Div(\Gamma)$ be a break divisor 
such that $\tau_* D - B$ is a principal divisor on $\Gamma$. 
Assume that $B$ is supported on two-valent points of $\Gamma$. 
Then there exist $x_i \in X(K)$ such that $\tau_* x_i = p_i$ and such that 
$D - \sum_{i=1}^g  x_i$ is a principal divisor on $X$. 
\end{theointro}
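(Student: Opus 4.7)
The strategy follows the Abel-Jacobi approach of \cite[Theorem 1.1]{BRab}: we reduce the lifting problem to a surjectivity statement for the non-archimedean Abel-Jacobi map on $X$, then exploit the Mumford uniformization of the Jacobian to verify it.

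Fix a base point $q_0 \in X(K)$, and let $\alpha_X \colon X^{\an} \to \operatorname{Jac}(X)^{\an}$ and $\alpha_\Gamma \colon \Gamma \to \operatorname{Jac}(\Gamma)$ denote the algebraic and tropical Abel-Jacobi maps. Then $D - \sum_i x_i$ is principal on $X$ if and only if $[D - gq_0] = \sum_i \alpha_X(x_i)$ in $\operatorname{Jac}(X)(K)$. Because each $p_i$ is two-valent in $\Gamma$, the tube $A_i := \tau^{-1}(p_i) \subset X^{\an}$ is an open analytic annulus, whose type-$1$ points form a dense subset; the $x_i$ we seek must lie in the $A_i(K)$. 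So the task is to show that the sum map $\sigma \colon \prod_i A_i(K) \to \operatorname{Jac}(X)(K)$, $(x_i) \mapsto \sum_i \alpha_X(x_i)$, takes the value $[D - gq_0]$.

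Because $X$ is a Mumford curve, $\operatorname{Jac}(X)$ has split multiplicative reduction, giving an analytic uniformization $\operatorname{Jac}(X)^{\an} \cong T^{\an}/M$ with $T \cong \GM_m^g$ and $M \subset T(K)$ a discrete cocompact lattice. Tropicalizing yields the retraction $\operatorname{Jac}(X)^{\an} \to \operatorname{Jac}(\Gamma) = \R^g / M_{\trop}$, which is compatible with $\tau$ under the two Abel-Jacobi maps. The hypothesis $\tau_* D \sim B$ on $\Gamma$ forces both $[D - gq_0]$ and every value $\sigma(x_1,\dots,x_g)$ with $\tau(x_i) = p_i$ to lie in the same fiber $F$ of this retraction. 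The fiber $F$ is a torsor under the compact analytic kernel $K_0 \subset \operatorname{Jac}(X)^{\an}$, which in the uniformization is the image of $(\mathcal{O}_K^\times)^g \subset T^{\an}$.

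The core of the proof is to show that $\sigma|_{\prod_i A_i}$ surjects onto $F$. Pick any initial tuple $y = (y_1,\dots,y_g) \in \prod_i A_i(K)$ (possible since $A_i(K) \neq \emptyset$); surjectivity onto $F$ is equivalent to surjectivity of $(x_i) \mapsto \sigma(x) - \sigma(y)$ onto $K_0$. Pulling back along the Mumford uniformization, $\alpha_X|_{A_i}$ becomes an explicit nonconstant analytic map from the annulus $A_i$ into $K_0$, so one obtains $g$ one-parameter analytic subfamilies of $K_0$. The main obstacle is to prove that these families jointly span $K_0$, equivalently that a certain derivative matrix at $y$ — a non-archimedean period matrix — has full rank; this is the Mumford-curve analogue of Jacobi inversion. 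Two-valence of the $p_i$ is essential here, as it guarantees that each $A_i$ is an annulus rather than a disc, so the associated parametrization is nontrivial and can contribute a new direction in $K_0$. Once the rank condition is checked, an open-mapping and compactness argument combined with density of type-$1$ points in each annulus produces the required tuple $(x_1,\ldots,x_g)$, completing the lifting.
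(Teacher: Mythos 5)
Your reduction is the same as the paper's: the statement is equivalent to the surjectivity of the sum map $\sigma\colon \prod_i \tau^{-1}(p_i)(K)\to J_0(X)$, where $J_0(X)$ is the fibre over $0$ of the retraction $\operatorname{Jac}(X)(K)\to\operatorname{Jac}(\Gamma)$ (this is Proposition \ref{prop BR lifting} plus the two-line deduction in the proof of Theorem \ref{lifting theorem}). The paper obtains that surjectivity by invoking the proof of Baker--Rabinoff's lifting theorem \cite{BRab} and observing that their genericity hypothesis, which concerns a choice of points on the positive-genus reduction curves $C_x$, is vacuous for a Mumford curve because $\prod_{g(x)>0}C_x(\tilde K)^{g(x)}$ is a single point. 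You instead propose to prove the surjectivity directly from the Mumford uniformization of $\operatorname{Jac}(X)$, but you stop exactly where the mathematics starts: ``the main obstacle is to prove that these families jointly span $K_0$ \dots\ once the rank condition is checked \dots''. That rank condition \emph{is} the theorem; deferring it is a genuine gap, not a routine verification.

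Two concrete reasons the sketch cannot be completed as written. First, you never use the hypothesis that $B$ is a \emph{break} divisor, i.e.\ that the $p_i$ lie on edges forming the complement of a spanning tree; two-valence alone is not enough. For instance, on a genus-$2$ Mumford curve put $p_1,p_2$ in the interior of the same edge of the skeleton: both fibres $\tau^{-1}(p_i)$ contribute the same direction in $K_0\cong(\mathcal{O}_K^\times)^2$ and $\sigma$ lands in a rank-one subgroup, so surjectivity fails even though each $p_i$ is two-valent. The invertibility of the matrix you defer is precisely the combinatorial content of the spanning-tree condition, via the pairing of the edges $e_i$ with a basis of $\HH_1(\Gamma,\Z)$. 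Second, the proposed ``open-mapping and compactness argument'' does not apply: $K$ is algebraically closed with dense value group, hence not locally compact, so $(\mathcal{O}_K^\times)^g$ is not compact, and a nonempty open subset (even an open subgroup) of it need not be everything. The argument that actually works, and the one in \cite{BRab}, passes to canonical reductions: $\tau^{-1}(p_i)$ is not an open annulus but the affinoid ``circle'' over $p_i$, with canonical reduction $\G_{m,\tilde K}$, and $\sigma$ induces a homomorphism of $\tilde K$-tori $\prod_i\G_m\to\G_m^{g}$ given by an integer matrix that is invertible exactly because of the break-divisor condition; surjectivity is then pulled back through the reduction maps. Without supplying this (or explicitly citing \cite{BRab} for it), the proof is incomplete.
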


Theorem \ref{thmC} is of independent interest, 
since, given a skeleton of $X$, it enables one to construct closed embeddings with nice tropicalizations.
We treat an example of this in Example \ref{tate curves} for a genus $1$ Mumford curve (also called a Tate curve). 

We give an idea of the proof of Theorem \ref{thmB}, which is carried out in Section \ref{section ff trop}.
Given an edge $e$ of $\Sigma(\varphi)$, using Theorem \ref{thmC}, 
we construct a rational function $f_e \in K(X)^*$ in such a way 
that $\log \vert f \vert$ has slope $1$ along $e$.
Considering the embedding $\varphi' := (\varphi, f_e) \colon X \to Y \times \Pbb^1$, 
this ensures that $\varphi'_{\trop}$ maps $e$ homeomorphically onto its image and 
that the corresponding stretching factor equals $1$ (see Definition \ref{weight} for the definition of stretching factor). 
Using a good choice of $D \in \Div(X)$ and $B \in \Div(\Gamma)$, 
Theorem \ref{thmC} moreover allows us to construct $f_e$ in such a way that 
the same holds for all edges of $\Sigma(\varphi')$ that 
are not contained in $\Sigma(\varphi)$. 
Doing so for every edge of $\Sigma(\varphi)$, we obtain Theorem \ref{thmB}. 

In Section \ref{section smooth trop}, we proceed similarly for smoothness
and thus prove that i) implies iii) in Theorem \ref{thmA}.

In Section \ref{section smooth mumford} we prove that, for a smooth projective curve $X$, 
the existence of a closed embedding with a smooth tropicalization 
already implies that $X$ is a Mumford curve. 
The key result we use is a joint observation by Mikhalkin, Sturmfels and Ziegler \cite{MikOberwolfach}, 
which states that a variety whose tropicalization is a tropical linear space 
is actually a linear space (see Theorem \ref{duck theorem}).
The version of the theorem we use was proved by Katz and Payne \cite{KatzPayne}
and works for trivially valued fields in any characteristic (see Theorem \ref{duck theorem}).
We also show that if $\Trop_\varphi(X)$ is smooth then $\varphi_{\trop}$ is necessarily fully faithful 
(see Theorem \ref{smooth implies ff}).

\begin{ack}
The author was inspired to reconsider the questions in this paper 
by a question asked by Hannah Markwig during an open problem session at the program
``Tropical geometry, amoebas and polytopes'' at the Institute Mittag-Leffler. 
He would like to thank Hannah Markwig for the encouragement and the Institute Mittag-Leffler 
for the wonderful working conditions. 
He would also like to thank Matt Baker, Walter Gubler, Yoav Len, Hannah Markwig, Sam Payne, 
Joe Rabinoff, Veronika Wanner and 
Annette Werner for helpful discussions and comments. 
He would also like to thank the anonymous referees for their precise reports and detailed comments.
\end{ack}

\subsection*{Conventions}

Throughout, $K$ will be an algebraically closed field that 
is complete with respect to a non-archimedean non-trivial absolute value $\abs_K$. 
We denote the value group by $\Lambda := \log \vert K^\times \vert_K$,
the valuation ring by $K^\circ$ and 
the residue field by $\Ktilde$.  
A variety over $K$ is a separated reduced irreducible scheme of finite type and 
a curve is a one-dimensional variety. 
$X$ will be a smooth projective curve over $K$. 
We will denote finite skeleta of $X$ by $\Gamma$ and 
completed extended skeleta in the sense of \cite{BPR2} by $\Sigma$. 
We will generally denote toric varieties by $Y$ and their dense tori by $T$.


\section{Preliminaries}

\subsection{Tropical toric varieties and tropical curves}

Let $N$ be a free abelian group of rank $n$, $M := \Hom_\Z(N, \Z)$ its dual, 
$N_\R := N \otimes \R$ and $\Delta$ a rational 
pointed fan in $N_\R$. 
We write $\T := \R \cup \{-\infty\}$.

For $\sigma \in \Delta$ we define the monoid 
$S_\sigma := \{ \varphi \in M \mid \varphi(v) \geq 0 \text{ for all } v \in \sigma \}$
and write $N(\sigma) := N_\R / \langle \sigma \rangle_\R$,
where $\langle \sigma \rangle_{\R}$ denotes the real vector space spanned by $\sigma$. 
We write
\begin{align*}
N_\Delta =  \coprod \limits_{\sigma \in \Delta} N(\sigma).
\end{align*}
We endow $N_\Delta$ with a topology in the following way:

For $\sigma \in \Delta$ write $N_\sigma = \coprod \limits_{\tau \prec \sigma} N(\tau)$. 
This is naturally identified with $\Hom_{\Monoids}(S_\sigma, \T)$. 
We give $N_\sigma$ the subspace topology of $\T^{S_\sigma}$. 
For $\tau \prec \sigma$, the space $\Hom(S_\tau, \T)$ is naturally identified with the open
subspace of $\Hom_{\Monoids}(S_\sigma, \T)$ of maps that map $\tau^{\perp} \cap M$ to $\R$. 
We define the topology of $N_\Delta$ to be the one obtained by gluing along these identifications.

\begin{defn}
We call the space $N_\Delta$ a  \emph{tropical toric variety}.
\end{defn}

The space $N_\Delta$ is sometimes called the canonical compactification of $N_\R$ 
with respect to $\Delta$. 
Note that $N_\Delta$ contains $N_\R$ as a dense open subset. 

\begin{Ex} \label{example tropical toric}
Let $N = \Z^n$ with basis $x_1,\dots,x_n$ and $\Delta$ be the complete fan 
whose rays are spanned by $-x_1,\dots,-x_n$ and  $x_0 := \sum x_i$. 
For any $d$-rays there is a face $\sigma$ 
of dimension $d$ that contains exactly these rays. 
Then $N(\sigma)$ is an $n{-}d$-dimensional vector space. 
The topology is such that $N_\Delta$ is homeomorphic to an $n$-simplex, 
where $N(\sigma)$ is identified with the relative interior of a $n{-}d$-dimensional simplex in the boundary. 
For example, $N_\R$ corresponds to the vertex at the origin in $\Delta$ 
and forms the interior of $N_\Delta$ when we view $N_\Delta$ as a simplex. 

However, we will heavily use the structure of $N_\R$ as a vector space, 
so we generally view $N_\Delta$ as a compactification of $N_\R$ by strata 
that are infinitely far away. 
\end{Ex}

\begin{defn}
Let $\CC$ be a one dimensional $\Lambda$-rational polyhedral complex in $N_\R$. 
For an edge $e$ (i.e.~a one-dimensional polyhedron) of $\CC$ we denote by 
$\Linear(e) = \{\lambda (u_1 - u_2) \mid u_1, u_2 \in e, \lambda \in \R \}$
the \emph{linear space of e}. 
Since $X$ is $\Lambda$-rational, $\Linear(e)$ contains a canonical lattice which we denote by $\Z(e)$. 

For a vertex $v$ of $e$ we denote by $w_{v,e}$ the unique generator of $\Z(e)$ that points
in $e$ away from $v$.

We call $\CC$ \emph{weighted} if every edge is equipped with a positive integral weight $m(e)$ and 
\emph{balanced} if for every vertex $v$ of $\CC$ we have
\begin{align*}
\sum_{e \colon v \prec e} m(e) w_{v,e} = 0.
\end{align*}
The \emph{local cone} at $v$ is the one-dimensional fan whose rays are spanned by the $w_{w,e}$ 
and given weight $m(e)$ for $v \prec e$. 
This is also sometimes referred to as the \emph{star} of the vertex $v$ (see for example \cite{MacStu:book}). 

\end{defn}

\begin{defn} \label{defn tropical curve}
A \emph{tropical curve in $N_\R$} is a one dimensional $\Lambda$-rational polyhedral complex 
equipped with weights on its edges that satisfies the 
balancing condition, up to the equivalence relation generated by subdivision of edges preserving the weights. 

A \emph{tropical curve $X$ in a tropical toric variety $N_\Delta$} is the 
closure in $N_\Delta$ of a tropical curve $X^\circ$  in $N_\R$. 
\end{defn}

$X \setminus X^\circ$ is a finite set, whose points we consider as vertices of $X$ 
and call the \emph{infinite vertices}. 
The edges of $X$ are the closures of the edges of $X^\circ$.

A \emph{$\Lambda$-metric graph} (which we will often just call a \emph{metric graph}) 
is, roughly speaking, a finite graph in which every edge $e$ has a positive length $l_e \in \Lambda \cup \{\infty\}$.
We allow loop edges, meaning edges whose endpoints agree and half open edges, 
i.e.~edges which have only one endpoint. 
If $l_e \in \Lambda_{>0}$, we view $e$ as an interval of length $e$. 
Half open edges are identified with $\R_{\geq 0}$. 
Leaf edges are the only edges that are allowed to have infinite length and are identified with $[0, \infty]$ with
the topology of a closed interval. 
For a more precise account on metric graphs, we refer the reader to \cite[Section 2.1]{ABBR}. 

By an \emph{edge} of a metric graph $\Gamma$ we mean an edge in some graph model $G$ of $\Gamma$. 
For an edge $e$ of $\Gamma$ we denote by $\mathring e$ the relative interior of $e$, 
meaning $e$ with its endpoints removed. 
For two points $x,y \in \mathring e$ we denote by $d_e(x,y)$ their distance in $\mathring e$. 
(Note that this might not be the distance in $\Gamma$, as there might be a shorter path that leaves $\mathring e$.)

We call a metric graph \emph{finite} if all its edges have finite length.

\begin{Ex}
A tropical curve in $N_\R$ has a canonical structure as a metric graph 
where the length of an edge is given by the \emph{lattice length},
meaning the length of the primitive vector $w_{v,e}$ equals $1$. 
\end{Ex}

A tropical curve $X$ in a tropical toric variety $N_\Delta$ is not necessarily 
a metric graph since two infinite rays might meet at infinity, 
creating a vertex at infinity which does not have valence $1$. 
However, $X$ is a metric graph if every point in $X \setminus X^\circ$ has exactly one adjacent edge.

\begin{defn} \label{defn smooth}
An edge in a tropical curve is \emph{smooth} if its weight is $1$. 
A finite vertex $v$ is \emph{smooth} if 
$\langle w_{v,e} \mid v \prec e \rangle_\Z$ is a saturated lattice of rank $\val(v)-1$ in $N$, 
where $\val(v)$ is the number of edges adjacent to $v$. 
An infinite vertex is smooth if it has one adjacent edge. 
A vertex that is not smooth is called \emph{singular}. 
A tropical curve is \emph{smooth} if all its edges and vertices are smooth. 
\end{defn}

\begin{bem}
Following \cite{IKMZ} a tropical variety is \emph{smooth} if it 
is locally isomorphic to the Bergman fan of a matroid. 

A one-dimensional weighted fan in $\R^n$ is the Bergman fan of a matroid if
and only if it is isomorphic to the fan whose rays are spanned by 
$x_1,\dots,x_n$ and $- \sum_{i=1}^n x_i$ and all weights are $1$. 
Thus Definition \ref{defn smooth} agrees with the one in \cite{IKMZ} for the case of curves. 
\end{bem}

\begin{Ex}
Consider the tropical curves in Figure \ref{Figure smooth}. 
Each of them depicts a vertex in a tropical curve in $\R^2$ with lattice $N = \Z^2$.
In the leftmost picture, the outgoing directions are $(-1,0), (0,-1)$ and $(1,1)$, 
which span $\Z^2$, thus $v_1$ is a smooth vertex. 
In the picture in the middle, the span of the primitive vectors is again $\Z^2$, but
there are $4$ vertices adjacent to $v_2$, thus $v_2$ is not smooth. 
In the picture on the right, the outgoing directions are $(2,-1), (-1,2)$ and $(-1,-1)$. 
The span of these vectors is $\{ (x,y) \in \Z^2 \mid x-y \text{ is divisible by } 3\}$. 
This has rank $2$, but is not saturated in $\Z^2$, thus $v_3$ is not 
a smooth vertex. 
\end{Ex}

\begin{figure}
\begin{tikzpicture}
\draw (0,0) -- (-1,0);
\draw (0,0) -- (0,-1); 
\draw (0,0) -- (1,1); 

\node [above] at (0,0) {$v_1$}; \fill (0,0) circle (2pt);

\draw (4,-1) -- (4,1); 
\draw (3,0) -- (5,0); 

\node[below] at (4.2,0) {$v_2$}; \fill (4,0) circle (2pt);

\draw (8,0) -- ++(2,-1); 
\draw (8,0) -- ++(-1, 2); 
\draw (8,0) -- ++(-1,-1); 

\node[right] at (8,0) {$v_3$}; \fill (8,0) circle (2pt); 

\end{tikzpicture}
\caption{The types of vertices in tropical curves in $\R^2$. 
The vertex on the left is smooth, the other two vertices are not smooth.}
\label{Figure smooth}
\end{figure}
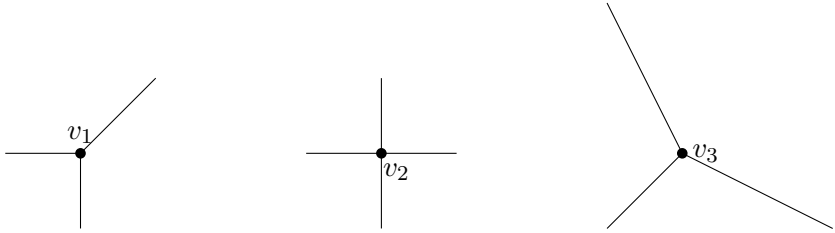


\subsection{Berkovich curves and their extended skeleta}

Let $X$ be a variety over $K$. 
The associated Berkovich space \cite{BerkovichSpectral} is 
\begin{align*}
\Xan := \{ x = (p_x, \abs_x) \mid p_x \in X, \abs_x 
\text{ is an absolute value on } k(p_x) \text{ extending } \abs_K\}
\end{align*}
with the topology such that the canonical forgetful map $\Xan \to X$ is continuous and 
for all open subsets $U$ of $X$ and $f \in \Ocal(U)^\times$ the map $U^{\an} \to \R, (p_x, \abs_x) \mapsto \vert f(p_x) \vert_x$ is continuous. 
We will often write $\vert f(x) \vert := \vert f(p_x) \vert_x$. 
If $X = \Spec(A)$ is an affine variety then 
\begin{align*}
\Xan = \{\abs \text{ multiplicative seminorm on } A \text{ extending }  \abs_K\}
\end{align*}
with the topology such that for all $f \in A$ the map $\Xan \to \R; \abs \mapsto \vert f \vert$ is continuous. 
For morphism $\varphi \colon X \to Y$ of $K$-varieties we obtain a morphism $\varphi^{\an} \colon \Xan \to Y^{\an}$. 

Now let $X$ be a curve over $K$.
For $x  \in \Xan$ we denote by $\Hscr(x)$ the completion of $k(p_x)$ with respect to $\abs_x$
and by $\Htilde$ its residue field. 
Following Berkovich and Thuillier \cite{BerkovichSpectral, Thuillier}
we say $x$ is of type I if $p_x \in X(K)$ and 
of type II if $p_x$ is the generic point of $X$ and $\trdeg [ \Htilde : \Ktilde ] = 1$. 
If $x$ is of type I, then $\abs_x = \abs_K$, thus 
the forgetful map $\Xan \to X$ induces a bijection from the set of type I points of $\Xan$
onto $X(K)$. 
We will thus identify $X(K)$ with the subset of $\Xan$ that consists of type I points. 
If $x$ is of type II, then we denote by $C_x$ the smooth projective $\Ktilde$-curve with function field $\Htilde$
and by $g(x)$ its genus, which we call the \emph{genus of $x$}.

We now recall the notion of completed skeleta of $\Xan$,
which is due to Baker, Payne and Rabinoff \cite{BPR2}.

\begin{defn}
We consider $\A^1 = \Spec K [T]$. 
For $- \infty \leq s < r \in \R$ denote 
\begin{align*}
B(r) = \{ x \in \A^{1, \an} \mid \log \vert T \vert_x < r\} \text{ and } 
A(r,s) = \{ x \in \A^{1, \an} \mid s < \log \vert T \vert_x < r \}.
\end{align*}
We call $B(r)$ an \emph{open disc} of logarithmic radius $r$ and 
$A(r,s)$ a \emph{generalized open annulus} of logarithmic radii $s$ and $r$. 
We call $A(r,s)$ an \emph{annulus} with logarithmic radii $s$ and $r$ if $s \in \R$
and a \emph{punctured disc} of radius $r$ if $s = -\infty$. 
We call $r - s \in \R \cup \infty$ the \emph{length of A(r,s)}. 

We denote by $\rho_{B(t)}$ the element of $B(t)$ defined by 
$\left \vert \sum a_i T^i \right \vert_{\rho_{B(t)}} = \max_i \vert a_i \vert t^i$ 
and call the set 
\begin{align*}
\Sigma(A(r,s)) = \left\{ \rho_{B(t)}  \mid s < \log t < r \right\} 
\end{align*}  
the \emph{skeleton} of $A(r,s)$.
There is a canonical retraction $\tau \colon A(r,s) \to \Sigma(A(r,s))$ which is a strong deformation retraction. 
\end{defn}

\begin{defn}
Let $X$ be a smooth projective curve over $K$. 
A \emph{completed semistable vertex set} $V$ of $X$ is a finite subset of $\Xan$ 
consisting of type I and II points such that $\Xan \setminus V$
is isomorphic to a disjoint union of finitely many generalized open annuli and 
infinitely many open discs. 
\end{defn}

For a completed semistable vertex set $V$ of $\Xan$ there is a canonical associated subspace $\Sigma(V)$ of $\Xan$, 
called the \emph{completed skeleton} $\Sigma(V)$,
which is a metric graph.
There is a canonical retraction $\tau_V \colon \Xan \to \Sigma(V)$, 
such that $\Sigma(V)$ is a strong deformation retract of $\Xan$. 
As the name suggests, the vertex set of $\Sigma_V$ is $V$. 
The edges are the skeleta of the generalized open annuli 
that are connected components of $\Xan \setminus V$. 
The length of such an edge is the length of the corresponding annulus. 

If $X$ is projective and $V$ is a completed semistable vertex set that only consists of type II points, 
we call $V$ a \emph{semistable vertex set} and $\Sigma(V)$ a \emph{finite skeleton} of $X$. 
A finite skeleton is a finite metric graph and we will often denote it by $\Gamma$. 

Let $V$ be a completed semistable vertex set of $X$. 
Then the set of type II points in $V$ forms a semistable vertex set for $X$. 
We call the associated finite skeleton the \emph{finite part} of $\Sigma(V)$ and denote it by $\Sigma(V)_{\fin}$.

\begin{defn} \label{defn Mumford}
A smooth projective curve of genus $g > 0$ is called \emph{Mumford curve}
if for some semistable vertex set $V$ the skeleton $\Gamma(V)$ 
has first Betti number equal to $g$.
\end{defn}

\begin{bem} \label{bem Mumford}
Note that since $\Gamma(V)$ is a deformation retract of $\Xan$, 
the first Betti number of $\Gamma(V)$ is independent of $V$. 
Thus we might replace ``some'' by ``every'' in Definition \ref{defn Mumford}. 
Furthermore $X$ is a Mumford curve if and only if $g(x) = 0$ for all type II points $x$ in $\Xan$. 
Another equivalent definition of Mumford curve is that any point $x \in \Xan$ has a neighborhood 
that is isomorphic to an open subset of $\Pbb^{1,\an}$ \cite[Proposition 2.26 \& Theorem 2.28]{JellWanner}. 
\end{bem}


\subsection{Tropicalization of curves}

Let $Y$ be a toric variety with dense torus $T$. 
Let $N$ be the cocharacter lattice of $T$, $N_\R := N \otimes \R$ and 
$\Delta$ the fan in $N_\R$ associated to $Y$. 

\begin{defn}
The \emph{tropicalization of Y} is 
\begin{align*}
\Trop(Y) := N_\Delta.
\end{align*}
\end{defn}

There is a canonical tropicalization map $\trop \colon Y^{\an} \to \Trop(Y)$, 
which is a continuous proper map of topological spaces \cite[Section 3]{Payne}. 

We assume that the reader is familiar with tropicalizations of closed subvarieties 
of algebraic tori \cite{MaclaganSturmfels, Gubler2}. 
Here we consider tropicalizations of closed subvarieties of toric varieties, 
which may be seen as a compactification of the latter. 
We quickly sketch the relation: 
Given a closed embedding $\varphi \colon X \to Y$ of a smooth projective curve X into
a toric variety $Y$ that meets the dense torus $T$, denote by $X^\circ := \varphi^{-1}(T)$. 
Then $\Trop_\varphi(X^\circ)$ is a dense open subset of $\Trop_\varphi(X)$ 
and we obtain the latter from the former by putting points at the end of the unbounded edges. 

\begin{Ex}
If $Y = \G_m^n$ is a torus of dimension $n$ with fixed coordinates, then $\Delta$ is only the origin in $\R^n$ and 
we have $\Trop(Y) = \R^n$. 
The restriction of the map $\trop \colon \G_m^{n, \an} \to \R^n$ to $\G_m^n(K) = (K^*)^n$ is the
usual tropicalization map $X(K) \to \R^n; x \mapsto (\log \vert x_1 \vert_K,\dots,\log \vert x_n \vert_K)$. 

If $Y = \Pbb^1$, then Example \ref{example tropical toric} shows that $\Trop(\Pbb^1)$ 
is homeomorphic to a closed interval.
Since it contains a one-dimensional vector space as a dense open subset, a good point of view is
$\Trop(\Pbb^1) = [-\infty, \infty]$ with the topology of a closed interval. 

The map $\trop \colon \Pbb^{1, \an} \to \Trop(\Pbb^1)$ is then given by 
$(p, \abs_x) \mapsto \log \vert z(p) \vert_x$, where $z$ is the coordinate function on $\Pbb^1$.  
\end{Ex}

\begin{bem}
For two toric varieties $Y_1$ and $Y_2$, we have
$\Trop(Y_1 \times Y_2) = \Trop(Y_1) \times \Trop(Y_2)$. 
This holds because the fan of $Y_1 \times Y_2$ is the product of the fans of $Y_1$ and $Y_2$. 
\end{bem}

Let $X$ be a curve over $K$.
For a closed embedding $\varphi \colon X \to Y$ we denote $\varphi_{\trop} := \trop \circ \varphi^{\an}$ and 
$\Trop_\varphi(X) := \varphi_{\trop}(\Xan)$ 
the associated tropicalization of $X$. 
One can define canonical weights on $\Trop_{\varphi}(X)$ 
that make it into a tropical curve in $\Trop(Y)$ in the sense of Definition \ref{defn tropical curve} 
(see for example \cite{Gubler2}).
We will define these weights in Definition \ref{weight}. 
 
\begin{defn} \label{defn refinement}
If $Y'$ is another toric variety,
$\varphi' \colon X \to Y'$ is another closed embedding and $\pi \colon Y' \to Y$ is a morphism of toric varieties, 
there exists a canonical map $\Trop(Y') \to \Trop(Y)$, which is linear on the dense subset $N_\R$ 
and maps $\Trop_{\varphi'}(X)$ onto $\Trop_{\varphi}(X)$. 
We call $\varphi'$ a \emph{refinement} of $\varphi$. 
\end{defn}

Note that refinements yield the inverse system in Payne's result that the inverse 
limit of all tropicalizations is homeomorphic to $\Xan$ \cite[Theorem 4.2]{Payne}.


\subsection{Factorization skeleta} \label{extended skeleton of an embedding}

Let $\varphi \colon X \to Y$ be a closed embedding of a smooth projective curve $X$ into a toric variety $Y$
that meets the dense torus $T$.  
Denote by $X^\circ := \varphi^{-1}(T)$ the preimage of the dense torus.

\begin{defn} \label{defn minimal skeleton}
Let $\Sigma(\varphi)$ be the set of points in $\Xan$ that do not have 
an open neighborhood that is isomorphic to an open disc and 
contained in $(X^\circ)^{\an}$. 
We call $\Sigma(\varphi)$ the \emph{completed skeleton associated to $\varphi$}. 
\end{defn}

The set $\Sigma(\varphi)$ is indeed a completed skeleton for $X$ \cite[Theorem 4.22]{BPR2}. 
We denote by $\tau_\varphi \colon \Xan \to \Sigma(\varphi)$ the retraction.

Baker, Payne and Rabinoff show that we have a commutative diagram 
\begin{align} \label{dia factorization}
\begin{xy}
\xymatrix{
\Xan \ar[rr]^{\varphi_{\trop}} \ar[rd]_{\tau_{\varphi}}  && \Trop_\varphi(X) \\
& \Sigma(\varphi) \ar[ru]_{\varphi_{\trop}|_{\Sigma(\varphi)}} 
}
\end{xy}
\end{align}
and that $\varphi_{\trop}|_{\Sigma(\varphi)}$ is linear on 
each edge of $\Sigma(\varphi)$ \cite[Lemma 5.3 \& Proposition 5.4 (1)]{BPR}.

We can subdivide $\Trop_{\varphi}(X)$ and $\Sigma(\varphi)$ in such a way that each edge of $\Sigma(\varphi)$ 
is either contracted to a point or 
mapped homeomorphically to an edge of $\Trop_{\varphi}(X)$ \cite[Lemma 5.4. (2)]{BPR}. 
Let $e$ be an edge in $\Trop_\varphi(X)$. 
Let $e_1,\dots,e_k$ be the edges of $\Sigma(\varphi)$ mapping homeomorphically to $e$. 
For each $i$, we fix $x_i \neq y_i \in \mathring e_i$.

\begin{defn} \label{weight}
We call  
\begin{align*} 
m(e_i) = \frac {d_e(\varphi_{\trop}(x_i),\varphi_{\trop}(y_i))} {d_{e_i}(x_i,y_i)} 
\text{ and } m(e) = \sum_{i =1}^k m(e_i)
\end{align*}
the \emph{stretching factor} of $\varphi_{\trop}|_{e_i}$ and \emph{the weight of $e$}, respectively.
\end{defn}
 
The definition of weight agrees with the usual one 
(see for example \cite[Definition 3.14]{Gubler2}) by \cite[Corollary 5.9]{BPR}.

\begin{prop} \label{prop defn fully faithful}
Let $\varphi \colon X \to Y$ be a closed embedding of $X$ into a toric variety that meets the dense torus $T$
and $\Sigma(\varphi)$ the associated skeleton. 
Denote by $X^{\circ} := \varphi^{-1}(T)$. 
Then the following are equivalent: 
\begin{enumerate}
\item
$\varphi_{\trop}$ maps $\Sigma(\varphi)$ homeomorphically onto its image and is an 
isometry when restricted to $\Sigma(\varphi) \cap X^{\circ, \an}$.
\item
The map $\varphi_{\trop}|_{\Sigma(\varphi)} \colon \Sigma(\varphi) \to \Trop_{\varphi}(X)$ 
is injective and all weights on $\Trop_{\varphi}(X)$ are $1$. 
\end{enumerate}
\end{prop}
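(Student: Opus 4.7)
The plan is to verify the two implications separately. Throughout, I use the factorization diagram (\ref{dia factorization}) together with the fundamental fact (recalled just after the definition of the stretching factor) that on each edge $e_i$ of $\Sigma(\varphi)$, the restriction $\varphi_{\trop}|_{e_i}$ is integer-affine with respect to the $\Lambda$-rational lattice structures on $e_i$ and on $\Trop(Y)$. Consequently $e_i$ is either contracted to a point, or mapped homeomorphically to an edge of $\Trop_\varphi(X)$ with stretching factor $m(e_i) \in \Z_{>0}$. I also record that the relative interior of every edge of $\Sigma(\varphi)$ lies in $X^{\circ,\an}$: the only type I points of $\Sigma(\varphi)$ lie in $X(K) \setminus X^\circ(K)$ (by Definition \ref{defn minimal skeleton}, since any type I point of $X^\circ$ has an open disc neighborhood inside $X^{\circ,\an}$), and such points occur as endpoints, not interior points, of their edges.

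For the direction (i) $\Rightarrow$ (ii): injectivity of $\varphi_{\trop}|_{\Sigma(\varphi)}$ is part of the homeomorphism assumption. Given any edge $e$ of $\Trop_\varphi(X)$ in a compatible subdivision, injectivity forces the list $e_1,\dots,e_k$ of preimage edges to have length $k=1$, so $m(e)=m(e_1)$. Picking $x,y \in \mathring e_1 \subset X^{\circ,\an}$, the isometry hypothesis yields $d_e(\varphi_{\trop}(x),\varphi_{\trop}(y)) = d_{e_1}(x,y)$, so $m(e_1)=1$.

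For the direction (ii) $\Rightarrow$ (i): since the $m(e_i)$ are positive integers on non-contracted edges, the identity $m(e) = \sum_i m(e_i) = 1$ combined with injectivity forces each edge $e$ of $\Trop_\varphi(X)$ to have exactly one preimage edge in $\Sigma(\varphi)$, mapped homeomorphically with stretching factor $1$; in particular no edge of $\Sigma(\varphi)$ is contracted. Now $\Sigma(\varphi)$ is compact (a finite graph whose possibly infinite leaf edges are topologized as closed intervals $[0,\infty]$) and $\Trop_\varphi(X)$ is Hausdorff, so the continuous injection $\varphi_{\trop}|_{\Sigma(\varphi)}$ is a homeomorphism onto its image. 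Stretching factor $1$ on each edge, read on the interior points that lie in $X^{\circ,\an}$, means that $\varphi_{\trop}$ preserves arc length on each edge of $\Sigma(\varphi) \cap X^{\circ,\an}$; combined with injectivity (which prevents distinct geodesics from being identified), this yields the required isometry on $\Sigma(\varphi) \cap X^{\circ,\an}$.

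The only delicate point is the integrality of stretching factors on non-contracted edges, which rests on the fact that each character of the torus pulls back to a rational function whose tropicalization has integer slopes along the edges of $\Sigma(\varphi)$ (invoked via \cite[Corollary 5.9]{BPR}). The other potential subtlety, that the isometry is imposed only on $\Sigma(\varphi) \cap X^{\circ,\an}$, is handled uniformly because stretching factors are computed using interior points, all of which lie in $X^{\circ,\an}$.
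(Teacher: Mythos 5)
Your proposal is correct and follows essentially the same route as the paper: the homeomorphism in (ii) $\Rightarrow$ (i) comes from a continuous bijection between a compact space and a Hausdorff space, and the equivalence between the isometry condition and all weights being $1$ is read off directly from Definition \ref{weight} of the stretching factor. Your write-up merely spells out the details (integrality of stretching factors, the fact that edge interiors lie in $X^{\circ,\an}$) that the paper leaves implicit.
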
 
\begin{proof}
Assume that $ii)$ holds. 
The map $\varphi_{\trop}|_{\Sigma(\varphi)}$ is surjective, thus bijective.
Since it is a bijective map between compact Hausdorff spaces, it is a homeomorphism. 
Hence both $i)$ and $ii)$ imply that $\varphi_{\trop}|_{\Sigma(\varphi)}$ is a 
homeomorphism onto its image. 

Thus it remains to show that if  
$\varphi_{\trop}|_{\Sigma(\varphi)}$ 
is a homeomorphism it is an isometry when restricted to  $\Sigma(\varphi) \cap X^{\circ, \an}$ 
if and only if all weights on $\Trop_{\varphi}(X)$ are all equal to one. 
This follows from Definition \ref{weight}. 
\end{proof}

\begin{defn} 
We say that $\varphi_{\trop}$ is \emph{fully faithful} 
if the equivalent conditions of Proposition \ref{prop defn fully faithful} hold.  
\end{defn}

The notion of fully faithful tropicalization is stronger then the notion of faithful tropicalization
introduced by Baker, Payne and Rabinoff \cite{BPR}. 
It is also slightly stronger then the notion of totally faithful tropicalization introduced by 
Cheung, Fantini, Park and Ulirsch \cite{CFPU}
(see also \cite{CDMY}).
The difference is that a totally faithful tropicalization only needs to be an isometry when restricted to 
$\Sigma(\varphi) \cap X^{\circ, \an}$.
Note however that the authors of \cite{CFPU} mainly work in the situation of tropical compactifications and 
in this case 
the notions of totally faithful and fully faithful agree.


\subsection{Rational functions and divisors on metric graphs} \label{functions and divisors}

Let $\Gamma$ be a finite $\Lambda$-metric graph. 
A point $x \in \Gamma$ is called \emph{$\Lambda$-rational} 
if its distance from some, or equivalently every, vertex is in $\Lambda$.
A \emph{rational function} on $\Gamma$ is a piecewise linear function $F \colon \Gamma \to \R$ with integer slopes
all of whose points of non-linearity are $\Lambda$-rational. 
A \emph{divisor} on $\Gamma$ is a finite formal linear combination of $\Lambda$-rational points. 
Its \emph{degree} is the sum of the coefficients. 
We denote by $\Div(\Gamma)$ the group of divisors. 
For a rational function $F$ its divisor is 
\begin{align*}
\divisor(F) := \sum \lambda_i x_i \text{ where } 
\lambda_i := \sum_{e \colon x_i \prec e} d_eF(x_i) 
\end{align*}
and $d_eF(x_i)$ is the outgoing slope of $F$ along the edge $e$ at $x_i$. 
We call $\divisor(F)$ a \emph{principal divisor} on $\Gamma$. 
We denote by $\Prin(\Gamma)$ the group of principal divisors on $\Gamma$. 

Let $X$ be a smooth projective curve and $\Gamma$ a finite skeleton with retraction $\tau$. 
Let $f$ be in $K(X)^*$. 
Then $F :=  \log \vert f(x) \vert \big \vert_{\Gamma}$ is a rational function on $\Gamma$ and 
$\tau_* (\divisor(f)) = \divisor(F)$ \cite[Theorem 5.15]{BPR2} 
(see also \cite[Proposition 3.3.15]{Thuillier} for the same result phrased in a slightly different language).

\begin{defn}
We say that edges $e_1,\dots,e_g$ \emph{form the complement of a spanning tree of $\Gamma$}
if there exists a graph model $G$ for $\Gamma$ with set of edges $E$ 
such that $e_i \in E$ and the subgraph of $G$ spanned by the edges $E \setminus \{e_1,\dots,e_g\}$
is connected, contractible and contains all vertices of $G$. 
\end{defn}
 
Note that in this definition, $g$ is necessarily the first Betti number of $\Gamma$.

The notion of break divisor was introduced by Mikhalkin, and Zharkov \cite{MikZharII}. 
They observed that any degree $g$ divisor on a metric graph has a unique 
break divisor in its rational equivalence class (see Theorem \ref{MikhalkinZharkov}).
Break divisors were also studied in detail by An, Baker, Kuperberg, and Shokrieh, 
who also study discrete versions \cite{ABKS}.

\begin{defn}
Let $\Gamma$ be a metric graph and $g = \dim_\R \HH^1(\Gamma, \R)$ its first Betti number. 
A \emph{break divisor} is a degree $g$ effective divisor $B = p_1 + \dots + p_g$ such that 
there exist edges $e_1,\dots,e_g$ that form the complement 
of a spanning tree of $\Gamma$ such that $p_i \in e_i$. 
\end{defn} 

\begin{satz} [Mikhalkin - Zharkov] \label{MikhalkinZharkov}
Let $D$ be a degree $g$ divisor on $\Gamma$. 
Then there exists a unique break divisor $B$ on $\Gamma$ such that $D - B \in \Prin(\Gamma)$. 
\end{satz}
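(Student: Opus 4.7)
The plan is to analyze the map $\Phi \colon \coprod_T C_T \to \Pic^g(\Gamma)$, where $T$ ranges over the spanning trees of a fixed graph model of $\Gamma$, $C_T := \prod_{e \notin T} e$ is a $g$-dimensional cube, and $\Phi$ sends a tuple $(p_1, \ldots, p_g)$ to the class of the effective divisor $p_1 + \cdots + p_g$. By definition, every break divisor arises this way, so the theorem amounts to showing that $\Phi$ induces a bijection from an appropriate quotient of $\coprod_T C_T$ onto $\Pic^g(\Gamma)$.

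First I would endow each $C_T$ with its product metric and $\Pic^g(\Gamma) \cong H_1(\Gamma, \R)/H_1(\Gamma, \Z)$ with the canonical flat metric coming from the edge-length inner product on cycles. The weighted Matrix-Tree (Kirchhoff) theorem for metric graphs then yields
$$\sum_T \operatorname{vol}(C_T) = \sum_T \prod_{e \notin T} \ell(e) = \operatorname{vol}(\Pic^g(\Gamma)).$$
Next I would verify that each restriction $\Phi|_{\mathring{C}_T}$ is a local isometry: in coordinates in which the non-tree edges form a basis of $H_1(\Gamma, \Z)$, the Abel-Jacobi map simply records the displacement along each fundamental cycle, so its differential is, up to sign, the identity.

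Combining these two ingredients, a measure-theoretic argument forces $\Phi$ to be injective on $\coprod_T \mathring{C}_T$ and to hit a full-measure subset of $\Pic^g(\Gamma)$. Existence then follows because the image of $\Phi$ is closed (by compactness of each $C_T$ and continuity of $\Phi$) and has full measure, hence equals all of $\Pic^g(\Gamma)$. Uniqueness at a generic point follows directly from injectivity on the interiors.

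The hard part will be the boundary analysis: the cells $C_T$ and $C_{T'}$ for different spanning trees meet in lower-dimensional faces (where some $p_i$ coincides with a vertex of $\Gamma$), and one must verify that two distinct break divisors that are linearly equivalent cannot both arise from such overlapping faces. This requires a careful bookkeeping of how adjacent cells are glued under $\Phi$, essentially showing that a face where $p_i$ equals a vertex $v$ lies simultaneously in every cell whose complement contains an edge at $v$ and that $\Phi$ identifies these faces consistently. This step promotes the generic uniqueness to uniqueness for every break divisor.
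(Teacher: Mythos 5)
This statement is quoted in the paper from Mikhalkin--Zharkov and An--Baker--Kuperberg--Shokrieh without proof, so there is no internal argument to compare against; judged on its own, your sketch follows the standard volume-theoretic route but has two concrete problems. The first is that your quantitative claims are false as stated. With the flat metric on $\Pic^g(\Gamma)\cong H_1(\Gamma,\R)/H_1(\Gamma,\Z)$ induced by the edge-length inner product, the volume of the torus is $\sqrt{\det G}=\bigl(\sum_T\prod_{e\notin T}\ell(e)\bigr)^{1/2}$, not $\sum_T\prod_{e\notin T}\ell(e)$; and $\Phi|_{\mathring C_T}$ is not a local isometry: the derivative in the direction of unit-speed motion along a non-tree edge $e$ is, up to sign, the vector $\gamma_e^*$ dual to the fundamental-cycle basis, and these vectors are neither unit nor orthogonal. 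What is true, and what your measure-theoretic step actually needs, is that in the linear coordinates dual to a $\Z$-basis of $H_1(\Gamma,\Z)$ (well defined up to a unimodular change, hence inducing a canonical Haar measure) each $\Phi|_{\mathring C_T}$ is affine with Jacobian determinant $\pm1$, while the covolume of $H_1(\Gamma,\Z)$ in those coordinates is $\det G=\sum_T\prod_{e\notin T}\ell(e)$ by the weighted matrix--tree theorem. With that repair, your conclusion (injectivity on $\coprod_T\mathring C_T$, full-measure closed image, hence surjectivity) does go through.

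The second problem is the genuine gap: uniqueness is only established for classes represented by a break divisor with all chips in the interiors of edges, and the boundary case is exactly the content of the theorem that goes beyond the generic statement. The measure argument is blind to the faces of the cells, and the ``bookkeeping of how adjacent cells are glued'' that you defer is not routine -- a break divisor can be supported entirely at vertices (for instance $B=2v$ on a wedge of two circles at $v$ is a break divisor sitting in a corner of the unique cell), so one cannot reach it by any naive limiting or face-identification argument without further input. The standard way to close this, as in An--Baker--Kuperberg--Shokrieh, is to bypass the cell combinatorics and prove injectivity on all break divisors directly: if $B-B'=\divisor(f)$ with $f$ nonconstant, a maximum-principle analysis of $f$ on the locus where it attains its maximum forces $B'$ to carry more chips on a proper subgraph than the defining property of a break divisor permits, whence $f$ is constant and $B=B'$. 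Until an argument of this kind is supplied, your proposal establishes existence and only generic uniqueness.
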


Break divisors will play an important role in Theorem \ref{lifting theorem}, 
which we will use to prove our main theorems, as well as to construct 
tropicalizations in honeycomb form for elliptic curves (see Example \ref{tate curves}). 
In our applications we will deal with break divisors that are supported on two-valent points of $\Gamma$. 
If $B$ is such a break divisor then $\Gamma \setminus \supp(B)$ is connected and contractible. 

We will see in Example \ref{counterexample} that it is really necessary to restrict to break divisors
that are supported on two-valent points in Theorem \ref{lifting theorem}.


\section{Lifting theorem}

In this section $X$ is a smooth projective Mumford curve of genus $g$ over $K$. 
We fix a semistable vertex set $V$ with corresponding finite skeleton $\Gamma$ and 
retraction $\tau$. 
We denote by $J_0(X) := \{ [D] \in \Pic(X) \mid \tau_* D \in \Prin(\Gamma) \}$.

\begin{prop} \label{prop BR lifting}
Let $B = p_1 + \ldots + p_g$ be a break divisor on $\Gamma$ that is supported on two-valent points and 
write $R_i = \tau^{-1}(p_i) \cap X(K)$.
Then for all $\YS = (y_1,\dots,y_g) \in  R_1 \times \dots \times R_g$ the map
\begin{align*}
\varphi_{\YS} \colon R_1 \times \ldots \times R_g &\to J_0(X) \\ 
 (x_1,\dots,x_g) &\mapsto  \sum_{i=1}^g \left[x_i-y_i \right] 
\end{align*}
is a surjection.
\end{prop}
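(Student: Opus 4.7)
The plan is to reduce the claim to the Baker--Rabinoff lifting theorem \cite[Theorem 1.1]{BRab}, which asserts that every class in $\Pic^g(X)$ admits an effective representative $F = x_1 + \dots + x_g$ with $\tau_* F$ a break divisor on $\Gamma$.

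First I would translate the problem. Given $[D_0] \in J_0(X)$, set $D := D_0 + y_1 + \dots + y_g$, a divisor of degree $g$. Since $y_i \in R_i = \tau^{-1}(p_i)$, we have $\tau_*(y_1 + \dots + y_g) = p_1 + \dots + p_g = B$, hence $\tau_* D = \tau_* D_0 + B$. The assumption $[D_0] \in J_0(X)$ gives $\tau_* D_0 \in \Prin(\Gamma)$, so $[\tau_* D] = [B]$ in $\Pic^g(\Gamma)$.

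Next, applying the Baker--Rabinoff theorem to $D$ produces an effective divisor $F = x_1 + \dots + x_g$ on $X$ with $[F] = [D]$ in $\Pic^g(X)$ and $\tau_* F$ a break divisor. Since $[\tau_* F] = [\tau_* D] = [B]$ and since break-divisor representatives in a fixed linear equivalence class on $\Gamma$ are unique by Mikhalkin--Zharkov, we must have $\tau_* F = B = p_1 + \dots + p_g$. The hypothesis that $B$ is supported on two-valent points of $\Gamma$ means each $p_i$ lies in the interior of an edge of $\Gamma$; as the edges in the complement of a spanning tree are distinct, their interiors are pairwise disjoint, so the $p_i$ are pairwise distinct. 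After reordering the $x_i$, we can therefore arrange $\tau(x_i) = p_i$, i.e.\ $x_i \in R_i$. The linear equivalence $F \sim D$ then gives $\sum_{i=1}^g (x_i - y_i) \sim D_0$, whence $\varphi_E(x_1, \dots, x_g) = [D_0]$, proving surjectivity.

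The substantive mathematical input is the Baker--Rabinoff theorem itself. The only subtlety in deriving our statement from it lies in passing from the unordered lift that Baker--Rabinoff produces to the ordered tuple in $R_1 \times \dots \times R_g$ required by the definition of $\varphi_E$, and this is handled painlessly by the distinctness of the $p_i$ that follows from the two-valent hypothesis.
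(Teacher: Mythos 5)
There is a genuine gap, and it sits exactly where the real content of the proposition lies: in the citation of Baker--Rabinoff. The version of \cite[Theorem 1.1]{BRab} you invoke --- that \emph{every} class in $\Pic^g(X)$ has an effective representative $F$ with $\tau_*F$ a break divisor on $\Gamma$ --- is false for the skeleton $\Gamma$ in play here, which is an arbitrary fixed semistable vertex set, not the minimal one. The paper's own example (the remark following Example \ref{tate curves}) refutes it: on a Tate curve with $\Gamma$ a circle with trees attached, take $x \in X(K)$ retracting to a leaf $p_1$; the class $[x] \in \Pic^1(X)$ has $x$ as its \emph{unique} effective representative, and $\tau_*x = p_1$ is not a break divisor (it lies on a bridge). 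The same example shows the proposition itself fails when $B$ is supported on a point of valence $\geq 3$, so any correct proof must use the two-valent hypothesis in an essential way --- but your argument only uses it to get distinctness of the $p_i$ for the reordering step, which is not even needed there (the multiset identity $\tau_*F = B$ already yields a suitable reordering). Likewise, the Mumford hypothesis enters your argument only through degree bookkeeping, whereas it is the crux of why the statement holds.

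The actual Baker--Rabinoff theorem carries genericity hypotheses tied to the positive-genus residue curves, and the paper's proof does not cite its statement but its \emph{proof}: Baker and Rabinoff construct the map $\varphi_{(E_1,E_2)}$ on $R_1 \times \dots \times R_b \times C^*$, where $C^* = \prod_{g(x)>0} C_x(\Ktilde)^{g(x)}$, and prove it surjective onto $J_0(X)$ for \emph{generic} $E_2 \in C^*$. The observation that makes the proposition work is that for a Mumford curve $b = g$ and $C^*$ is a single point, so the genericity condition is vacuous and surjectivity holds for every base tuple $E$. To repair your proof you would need to either quote the Baker--Rabinoff statement with its precise hypotheses and verify them here (which is where two-valence and Mumford-ness must enter), or follow the paper and extract the surjectivity statement directly from their argument.
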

\begin{proof}
We consider \cite[Proof of Theorem 1.1]{BRab}.
Baker and Rabinoff work in the same setup, but for them $X$ is any curve, not necessarily a Mumford curve. 
Thus in their situation both the set of $\YS$ they allow and the domain of $\varphi_{\YS}$ is 
$(R_1 \times \ldots \times R_b) \times C^*$. 
Here $b$ is the first Betti number of the skeleton of $X$ and 
$C^* = \prod_{x \in \Xan; g(x) > 0} C_x(\Ktilde)^{g(x)}$. 
An element $\YS \in (R_1 \times \ldots \times R_b) \times C^*$ is denoted by 
$(\YS_1, \YS_2)$ for $\YS_1 \in R_1 \times \ldots \times R_b$ and $\YS_2 \in C^*$. 
They show that $\varphi_{(\YS_1,\YS_2)}$ is surjective when $\YS_2$ is generic. 
If $X$ is a Mumford curve, then $b = g$ and $C^*$ is just a one point set. 
Thus $\YS_2$ is automatically generic and our proposition follows.  
\end{proof}

\begin{satz} \label{lifting theorem}
Let $D \in \Div(X)$ of degree $g$ and $B = p_1 + \dots + p_g \in \Div(\Gamma)$ a break divisor 
such that $\tau_* D - B$ is a principal divisor on $\Gamma$. 
Assume that $B$ is supported on two-valent points of $\Gamma$. 
Then there exist $x_i \in X(K)$ such that $\tau_* x_i = p_i$ and such that 
$D - \sum_{i=1}^g  x_i$ is a principal divisor on $X$. 
\end{satz}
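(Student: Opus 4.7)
The strategy is to reduce Theorem \ref{lifting theorem} to a direct application of Proposition \ref{prop BR lifting}. The key observation is that the hypothesis $\tau_* D - B \in \Prin(\Gamma)$ is exactly what is needed to place the relevant divisor class inside the subgroup $J_0(X)$, onto which the map $\varphi_E$ surjects.

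\textbf{Step 1: Choose base points.} For each $i$, pick a type I point $y_i \in R_i = \tau^{-1}(p_i)$. Such points exist because $p_i$ lies in the (relative) interior of an edge of $\Gamma$, so $\tau^{-1}(p_i)$ contains the fiber over $p_i$ of the canonical retraction from the corresponding open annulus, which has plenty of $K$-rational points. Set $E = (y_1, \dots, y_g)$.

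\textbf{Step 2: Locate the target class in $J_0(X)$.} Consider the divisor class $[D - \sum_{i=1}^g y_i] \in \Pic(X)$. Pushing forward under $\tau$, we have $\tau_* y_i = p_i$ (as $y_i \in R_i$), hence $\tau_*\bigl(\sum y_i\bigr) = B$. Therefore
\begin{align*}
\tau_*\Bigl( D - \sum_{i=1}^g y_i \Bigr) = \tau_* D - B,
\end{align*}
which by hypothesis is principal on $\Gamma$. Consequently $[D - \sum y_i] \in J_0(X)$.

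\textbf{Step 3: Apply the surjection.} By Proposition \ref{prop BR lifting}, the map $\varphi_E \colon R_1 \times \dots \times R_g \to J_0(X)$ is surjective. Hence there exist $x_i \in R_i$ with
\begin{align*}
\sum_{i=1}^g [x_i - y_i] = \Bigl[ D - \sum_{i=1}^g y_i \Bigr] \quad \text{in } \Pic(X).
\end{align*}
Rearranging gives $[D - \sum x_i] = 0$, so $D - \sum_{i=1}^g x_i$ is a principal divisor on $X$. Since $x_i \in R_i = \tau^{-1}(p_i)$ we have $\tau_* x_i = p_i$, as required.

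\textbf{Main obstacle.} There is essentially no obstacle beyond having Proposition \ref{prop BR lifting} at hand: the whole content of the theorem is that the break divisor condition together with $\tau_* D - B \in \Prin(\Gamma)$ forces the lifting problem into the image of $\varphi_E$. The only minor care needed is in Step 1, to observe that the preimages $R_i$ contain type I points so that the chosen base points and the resulting $x_i$ are honest $K$-points of $X$; this is immediate from the structure of $\Xan$ near a point in the interior of an edge of $\Gamma$.
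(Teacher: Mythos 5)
Your proposal is correct and follows essentially the same route as the paper: pick type I base points $y_i$ over the $p_i$, observe that $\tau_*(D-\sum y_i)=\tau_*D-B$ is principal so the class lies in $J_0(X)$, and apply the surjectivity of $\varphi_E$ from Proposition \ref{prop BR lifting}. The only difference is that you spell out the (easy) existence of $K$-points in the fibers $\tau^{-1}(p_i)$, which the paper leaves implicit.
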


\begin{proof}
Let $y_i \in X(K)$ such that $\tau_* y_i = p_i$.
We have $\left[ D - \sum_{i=1}^g y_i \right] \in J_0(X)$. 
Thus by Proposition \ref{prop BR lifting} there exist $x_i \in \tau^{-1}(p_i) \cap X(K)$ such that 
$\left[ D - \sum_{i=1}^g y_i \right] = \left[ \sum_{i=1}^g (x_i - y_i) \right]$. 
In other words $\left[ D - \sum_{i=1}^g x_i \right] = 0$ 
which means that $D - \sum_{i=1}^g x_i$ is a principal divisor on $X$.   
\end{proof}

\begin{defn}
Let $e$ be an edge of $\Gamma$. 
Four points $p_1,p_2,p_3,p_4 \in \mathring e$ are called \emph{pillar points in $e$}
if they are $\Lambda$-rational, $d_e(p_1, p_2) = d_e(p_3, p_4)$ and
for $i = 2, 3$ we have $[p_{i-1},p_{i}] \cap [p_{i}, p_{i+1}] = p_i$. 
(See Figure \ref{figure pillar points}.)
\end{defn} 

\begin{figure}
\begin{tikzpicture}
\draw (-1,0.3) --  (0,0.3) -- (1,1.3) -- (3,1.3) -- (4,0.3) -- (5,0.3);
\draw (-1,0) -- (0,0) -- (1,0) -- (3,0) -- (4,0) -- (5,0);
\node [below] at (0,0) {$p_1$}; \fill (0,0) circle (2pt);
\node[below] at (1,0) {$p_2$}; \fill (1,0) circle (2pt);
\node[below] at (3,0) {$p_3$}; \fill (3,0) circle (2pt);
\node[below] at (4,0) {$p_4$}; \fill (4,0) circle (2pt);
\node[right] at (5,0) {$e$};
\end{tikzpicture}
\caption{An edge $e$ with four pillar points $p_1,p_2,p_3$ and $p_4$ 
and a piecewise linear function with divisor $p_1 - p_2 - p_3 + p_4$.}
\label{figure pillar points}
\end{figure}
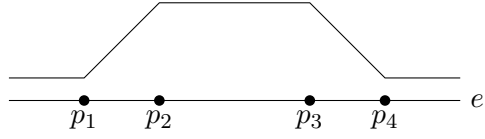

Figure \ref{figure pillar points} shows the graph of a piecewise linear function whose divisor is
$p_1 - p_2 - p_3 + p_4$. In particular that divisor is principal. 

\begin{kor} \label{lifting corollary}
Let $D \in \Div^0(X)$ such that $\tau_* D$ is a principal divisor on $\Gamma$.
Let $e_1,\dots,e_g$ be edges that form the complement of a spanning tree of $\Gamma$. 
Fixing pillar points $p_{i,1},p_{i,2},p_{i,3},p_{i,4}$ in $\mathring e_i$ 
there exist $x_{ij} \in X(K)$ such that $\tau(x_{ij}) = p_{ij}$ and $f \in K(X)^*$ such that 
$\divisor(f) = D + \sum_{i=1}^g (x_{i,1} + x_{i,4}) - \sum_{i=1}^g (x_{i,2} + x_{i,3})$.
\end{kor}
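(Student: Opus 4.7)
The plan is to reduce the corollary to a single application of Theorem \ref{lifting theorem}. The key observation is that of the four points $x_{i,j}$ to be produced for each $i$, only one actually needs to come from the lifting theorem; the other three may be chosen as arbitrary preimages under $\tau$.

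Concretely, I would first pick arbitrary lifts $x_{i,1}, x_{i,3}, x_{i,4} \in X(K)$ of $p_{i,1}, p_{i,3}, p_{i,4}$ (these exist since $\tau$ is surjective on $K$-points) and set
\[
D' := D + \sum_{i=1}^g (x_{i,1} + x_{i,4} - x_{i,3}),
\]
which is a divisor of degree $g$ on $X$. Take $B := \sum_{i=1}^g p_{i,2}$ on $\Gamma$. Since each $p_{i,2}\in \mathring e_i$ is a two-valent point and $e_1,\dots,e_g$ form the complement of a spanning tree, $B$ is a break divisor supported on two-valent points of $\Gamma$, so it fits the hypotheses of Theorem \ref{lifting theorem}.

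Next I would verify that $\tau_* D' - B$ is principal on $\Gamma$. Using that $\tau_* D$ is principal by hypothesis, it suffices to show that
\[
\sum_{i=1}^g \left(p_{i,1} - p_{i,2} - p_{i,3} + p_{i,4}\right)
\]
is principal. For each $i$, I would construct a ``trapezoid'' rational function $F_i$ on $\Gamma$ that vanishes outside $e_i$, rises with slope $1$ from $p_{i,1}$ to $p_{i,2}$, stays constant between $p_{i,2}$ and $p_{i,3}$, and falls with slope $-1$ from $p_{i,3}$ to $p_{i,4}$. The pillar condition $d_{e_i}(p_{i,1},p_{i,2}) = d_{e_i}(p_{i,3},p_{i,4})$ is precisely what ensures $F_i$ returns to zero at $p_{i,4}$, so extension by zero defines $F_i$ on all of $\Gamma$, and a direct computation of outgoing slopes gives $\divisor(F_i) = p_{i,1} - p_{i,2} - p_{i,3} + p_{i,4}$.

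Finally, I would apply Theorem \ref{lifting theorem} to $D'$ and $B$ to obtain points $x_{i,2} \in X(K)$ with $\tau(x_{i,2}) = p_{i,2}$ and $f \in K(X)^*$ such that $\divisor(f) = D' - \sum_i x_{i,2}$; substituting the definition of $D'$ gives the claimed identity. The only delicate point, rather than a serious obstacle, is the construction of $F_i$ on $\Gamma$, which is made possible exactly by the balanced lengths in the pillar points axiom; everything else is routine divisor bookkeeping.
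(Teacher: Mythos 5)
Your proposal is correct and follows essentially the same route as the paper: fix arbitrary lifts of $p_{i,1},p_{i,3},p_{i,4}$, form $D' = D + \sum_i(x_{i,1}+x_{i,4}-x_{i,3})$ and the break divisor $B=\sum_i p_{i,2}$, and apply Theorem \ref{lifting theorem}. Your explicit trapezoid construction of $F_i$ is exactly the verification the paper delegates to Figure \ref{figure pillar points}.
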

\begin{proof}
The divisor $\sum_{i=1}^g (p_{i,1} + p_{i,4}) - \sum_{i=1}^g (p_{i,2} + p_{i,3})$ 
is principal on $\Gamma$, thus so is 
$\tau_*D + \sum_{i=1}^g (p_{i,1} + p_{i,4}) - \sum_{i=1}^g (p_{i,2} + p_{i,3})$. 
Thus, for $j = 1, 3,4$, fixing $x_{ij}$ such that $\tau_* x_{ij} = p_{ij}$ and writing 
$D' = D + \sum_{i=1}^g (x_{i,1} + x_{i,4})  - \sum_{i=1}^g x_{i,3}$ and 
$B = p_{1,2} + \dots + p_{g,2}$, we find that  
$\tau_* D' - B$ is a principal divisor on $\Gamma$. 
Since $B$ is a break divisor supported on two-valent points, applying Theorem \ref{lifting theorem} to 
$D'$ and $B$ we get the result.
\end{proof}

\begin{Ex} [Tate curves] \label{tate curves}
Chan and Sturmfels use theta functions to produce nice 
tropicalizations of elliptic curves \cite{ChanSturmfels} (see also \cite[Theorem 6.2]{BPR}). 
In this example we show how Theorem \ref{lifting theorem} can 
be used to construct such nice tropicalizations combinatorially. 

Let $E$ be an elliptic curve with bad reduction. 
We will use Theorem \ref{lifting theorem} to construct a closed embedding $\varphi \colon E \to \Pbb^2$ whose 
tropicalization looks like the right hand side of Figure \ref{figure tate curve}, 
which Chan and Sturmfels call \emph{symmetric honeycomb form}.

The minimal skeleton $\Gamma_{\min}$ is a circle. 
We pick three points $q_1, q_2, q_3 \in \Gamma_{\min}$ that are equidistant from each other. 
Our skeleton $\Gamma$ is obtained from $\Gamma_{\min}$ 
by adding edges of length $d(q_i, q_j)/2$ at each of the $q_i$, 
denoting their endpoints by $p_i$. 
We subdivide each edge $[q_i,q_j]$ at its midpoint
and label our new vertices as on the 
left hand side of Figure \ref{figure tate curve}. 
The solid part of the figure is now our skeleton $\Gamma$.

We pick points $x_{1,1} \neq x_{1,2}, x_{2,1} \neq x_{2,2}, x_{3,1} \neq x_{3,2}$ and 
$x_{6} \in E(K)$ such that $\tau(x_{i,j}) = p_i$ and $\tau(x_6) = p_6$. 

Let $D_1 = - x_{1,1} + x_{2,1} - x_{2,2} + x_{3,1} - x_6$.
Then $\tau_* D_1 = -p_1 + p_3 - p_6$
and $\tau_* D_1 + p_4 = \divisor(F_1)$ for a rational function $F_1$ on $\Gamma$. 

Now applying Theorem \ref{lifting theorem} to $-D_1$ and $p_4$ 
we obtain a function $f_1 \in K(E)^*$ and $x_4 \in E(K)$ such that $\tau(x_4) = p_4$ and 
$\divisor(f_1) = D_1 + x_4$.
We normalize $f_1$ such that $F_1 = \log \vert f_1 \vert \big\vert_\Gamma$. 

Similarly let 
$D_2 = -x_{1,1} + x_{1,2} - x_{2,2} + x_{3,2} - x_6$
then $\tau_* D_2 = - p_2 + p_3 - p_6$
and $\tau_* D_2 + p_5 = \divisor(F_2)$, for a rational function $F_2$ on $\Gamma$.
We obtain a function $f_2 \in K(E)^*$ and $x_5 \in E(K)$ such that 
$\tau(x_5) = p_5$ and $\divisor(f_2) = D_2 + x_5$. 

Let $\varphi$ be the morphism associated to the rational map $[f_1: f_2 : 1] \colon E \to \Pbb^2$.  
By construction, the graph on the left hand side of Figure \ref{figure tate curve}, including 
the dashed lines, which are infinite edges, is the associated completed skeleton $\Sigma(\varphi)$. 
We write $G_i = \log \vert f_i \vert \big \vert_{\Sigma(\varphi)}$. 
Note that $G_i|_{\Gamma} = F_i$.
Further, $\varphi_{\trop}|_{\Sigma(\varphi)} = (G_1,G_2)$.
Thus $\Trop_{\varphi}(E) = (G_1,G_2)(\Sigma(\varphi))$ is the tropical curve 
on the right hand side of Figure \ref{figure tate curve}. 

The functions $f_1, f_2, 1$ are linearly independent over $K$, since $f_1$ is 
not constant on the zeros of $f_2$. 
Thus by the Riemann-Roch theorem, they form a basis of $L(D)$ where $D = x_{1,1} + x_{2,2} + x_6$. 
Since $D$ is very ample by \cite[Corollary IV.3.2(b)]{Hartshorne}, this shows that $\varphi$ is a closed embedding.
\end{Ex}

\begin{figure} 
\begin{tikzpicture} 

\draw (0,0) circle (1);

\draw (1,0) -- ++(1,0);
\draw (-0.5, 0.866) -- ++(-0.5, 0.866); 
\draw (-0.5, -0.866) -- ++(-0.5, -0.866); 

\draw [dashed] (-1,0) -- ++ (-1,0); 
\draw [dashed] (0.5, 0.866) -- ++(0.5, 0.866);
\draw [dashed] (0.5, -0.866) -- ++(0.5, -0.866);

 \draw [dashed] (2,0) -- ++(0.5,0.5); \draw [dashed] (2,0) -- ++(0.5,-0.5); 
 \draw [dashed] (-1, 1.732) -- ++(-0.8, 0); \draw [dashed] (-1,1.732) -- ++(0,0.8); 
 \draw [dashed] (-1, -1.732) -- ++(-0.8, 0); \draw [dashed] (-1,-1.732) -- ++(0,-0.8);

\node [left] at (1,0) {$q_1$};                    \fill (1,0) circle (2pt); 
\node [below] at (-0.5, 0.866) {$q_2$};        \fill (-0.5, 0.866) circle (2pt);
\node [above] at (-0.5, -0.866) {$q_3$};         \fill (-0.5, -0.866) circle (2pt);

\node [right] at (2,0) {\;$p_1$};                  \fill (2,0) circle (2pt); 
\node [right] at (-1, 1.732) {$p_2$};        \fill (-1, 1.732) circle (2pt); 
\node [right] at (-1, -1.732) {$p_3$};         \fill  (-1, -1.732) circle (2pt);

\node [right] at (-1,0) {$p_4$};                    \fill (-1,0) circle (2pt); 
\node [above] at (0.5, -0.866) {$p_5$};        \fill (0.5, -0.866) circle (2pt); 
\node [below] at (0.5, 0.866) {$p_6$};         \fill (0.5, 0.866) circle (2pt);

\node [right] at (2.5, 0.5) {$x_{1,1}$};          \fill (2.5,0.5) circle (2pt);     
\node [right] at (2.5, -0.5) {$x_{1,2}$};           \fill (2.5,-0.5) circle (2pt);
\node [above] at (-1,2.532) {$x_{2,2}$};          \fill (-1,2.532) circle (2pt);
\node [left] at (-1.8,1.732) {$x_{2,1}$};           \fill (-1.8,1.732) circle (2pt);
\node [below] at (-1,-2.532) {$x_{3,2}$};          \fill (-1,-2.532) circle (2pt);
\node [left] at (-1.8,-1.732) {$x_{3,1}$};           \fill (-1.8,-1.732) circle (2pt);
\node [right] at (1, -1.732) {$x_5$};        \fill (1, -1.732) circle (2pt); 
\node [left] at (-2,0) {$x_4$};                    \fill (-2,0) circle (2pt);
\node [above] at (1, 1.732) {$x_6$};               \fill (1,1.732) circle (2pt);

\draw  (6,-1) -- ++(1,0) -- ++(1,1) -- ++(0,1)-- ++(-1,0) -- ++(-1,-1) -- ++(0,-1);
\draw (8,1) -- ++ (1,1);
\draw              (7,1) -- ++ (0,1); \draw (7,2) -- ++(1,1);  \draw (7,2) -- ++(-1,0); 
\draw (6,0) -- ++(-1,0); 
\draw             (6,-1) -- ++(-1,-1); \draw (5,-2) -- ++(0,-1); \draw (5,-2) -- ++ (-1,0); 
\draw  (7,-1) -- ++(0,-1); 
\draw              (8,0) -- ++(1,0); \draw (9,0) -- ++(1,1); \draw (9,0) -- ++(0,-1);

\end{tikzpicture}
\caption{The skeleton and tropicalization of a Tate curve.}
\label{figure tate curve}
\end{figure}
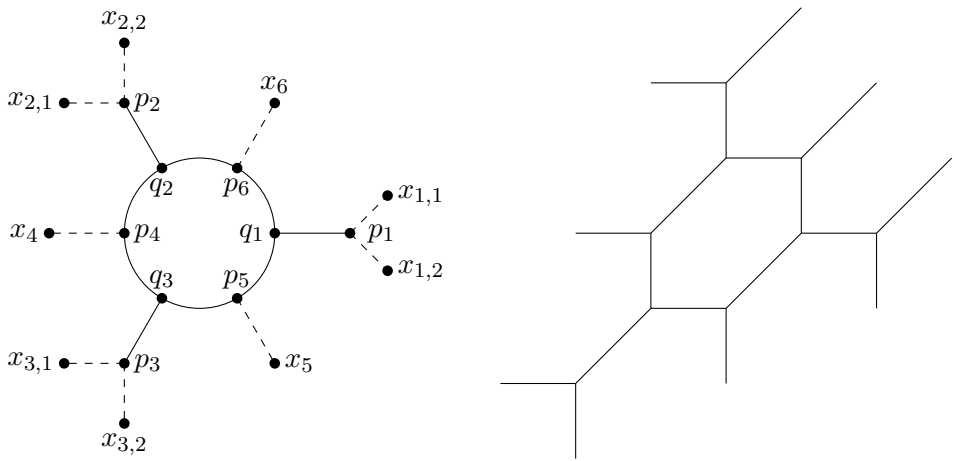

\begin{Ex} \label{counterexample}
In the same example, we can also see that Theorem \ref{lifting theorem}
does not hold if we do not require $B$ to be supported on two-valent points. 
Let $D = p_1$. Then the unique break divisor that is linearly equivalent to $D$ is $B = q_1$. 
However we cannot find $x$ and $y$ such that $\tau(x) = p_1$ and $\tau(y) = q_1$ such 
that $x-y$ is principal, since no difference of two distinct points is principal on an elliptic curve. 
\end{Ex}


\section{Fully faithful and smooth tropicalizations}

\subsection{Describing tropicalizations using extended skeleta}

Let $X$ be smooth projective curve of genus $g>0$. 
Let $V$ be a minimal semistable vertex set of $X$ with associated finite skeleton $\Gamma$ and retraction $\tau$.

\begin{defn}
Let $\Sigma$ be a completed skeleton of $X$  with retraction $\tau_{\Sigma}$, $f \in K(X)^*$ 
and write $\divisor(f) = \sum \pm x_i$. 
Then $f$ is said to be \emph{faithful} with respect to $\Sigma$ 
if we have $\tau_{\Sigma}(x_i) \neq \tau_{\Sigma}(x_j)$ for all $i \neq j$.  
\end{defn}

Note that this implies that $f$ has only simple poles and zeros. 

\begin{Const} \label{higher tropicalization}
Let $\varphi \colon X \to Y$ be a closed embedding of $X$ into a toric variety $Y$ that meets the dense torus. 
Let $\Sigma(\varphi)$ be the completed skeleton associated to $\varphi$.  
Let $f \in K(X)^*$ be faithful with respect to $\Sigma(\varphi)$.
Consider the induced closed embedding $\varphi' = (\varphi, f) \colon X \to Y \times \Pbb^1$. 

We obtain the associated skeleton $\Sigma(\varphi')$ for $\varphi'$ by 
adding infinite rays $[x_i, \tau_{\varphi}(x_i)]$ for all $x_i \in \supp(\divisor(f))$.  
We denote by $\tau_{\varphi'}$ the associated retraction. 

We have the following diagram
\begin{align*}
\begin{xy}
\xymatrix{
\Xan \ar[rr]^{\tau_{\varphi'}} \ar[rrd]_{\tau_{\varphi}} &&\Sigma(\varphi') \ar[rr]^{\varphi'_{\trop}} \ar[d] && 
\Trop_{\varphi'}(X) \ar[d] \ar[r] & \Trop(Y) \times \Trop(\Pbb^1) \ar[d]^{\pi_1} \\
&& \Sigma(\varphi) \ar[rr]^{\varphi_{\trop}}  && \Trop_{\varphi}(X) \ar[r] &\Trop(Y).
}
\end{xy}
\end{align*}
The map on the left contracts the edges $[x_i, \tau_{\varphi}(x_i)]$ to $\tau_{\varphi}(x_i)$.
The map $\pi_1$ on the right is forgetting the last coordinate. 

Thus we obtain $\Trop_{\varphi'}(X)$ from $\Trop_\varphi(X)$ in two steps:
\begin{enumerate}
\item 
Take the graph of $\log \vert f \vert$ restricted to $\Sigma(\varphi)$. 
\item
Add the images of the edges $e_i = [x_i, \tau(x_i)]$. 
These are infinite rays from $(\varphi_{\trop}(x_i), \log \vert f(x_i) \vert)$ to  $(\varphi_{\trop}(x_i), \pm \infty)$
where the sign of $\infty$ is the opposite of the sign of $x_i$ in $\divisor(f)$. 
\end{enumerate}
\end{Const}

\begin{lem} \label{lem new edges}
In the situation of Construction \ref{higher tropicalization}, 
every edge $e$ in $\Sigma(\varphi')$ that is not an edge of $\Sigma(\varphi)$ is infinite and 
satisfies $m(e) = 1$.
\end{lem}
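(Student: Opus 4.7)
The plan is to identify the new edges of $\Sigma(\varphi')$ explicitly and then compute the stretching factor directly from the local analytic model of $f$ at its zeros and poles.

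By Construction \ref{higher tropicalization}, an edge $e$ of $\Sigma(\varphi')$ not already contained in $\Sigma(\varphi)$ must be one of the rays $[\tau_\varphi(x_i), x_i]$ attached at some $x_i \in \supp(\divisor(f))$ with $x_i \notin \Sigma(\varphi)$; in particular $x_i \in X^\circ$, and $x_i$ is a type I point of $\Xan$, so $e$ has infinite length. For the stretching factor, faithfulness of $f$ with respect to $\Sigma(\varphi)$ ensures that the connected component $D$ of $\Xan \setminus \Sigma(\varphi)$ containing $x_i$ meets $\supp(\divisor(f))$ only at $x_i$; since $D \subset (X^\circ)^{\an}$ and $x_i \notin \Sigma(\varphi)$, the component $D$ is an open disc. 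Choosing a coordinate $T$ on $D$ that vanishes at $x_i$, I may write $f|_D = T^{\pm 1} u$ for a unit $u$ on $D$, whose absolute value is therefore constant along the skeleton of $D$. Hence $\log \vert f \vert$ restricted to $e$ is affine of slope $\pm 1$ in the canonical length parameter.

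On the other hand, $e \setminus \{\tau_\varphi(x_i)\}$ lies entirely in $D$, so $\tau_\varphi$ is constant on it, and by the factorization diagram \eqref{dia factorization} so is $\varphi_{\trop}|_e$. Combining the two observations, $\varphi'_{\trop}|_e = (\varphi_{\trop}, \log \vert f \vert)|_e$ maps $e$ isometrically onto an infinite vertical ray in $\Trop(Y) \times \Trop(\Pbb^1)$. Definition \ref{weight} then yields $m(e) = 1$.

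I do not anticipate a serious obstacle; the argument is purely local and amounts to the standard local form of a rational function with a simple zero or pole on an open disc. The only mildly delicate point is to record that $D$ is a genuine open disc containing exactly one point of $\supp(\divisor(f))$, which is precisely what faithfulness together with $D \subset (X^\circ)^{\an}$ buys us.
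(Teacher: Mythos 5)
Your proof is correct and follows essentially the same route as the paper: the new edges are exactly the attached infinite rays, and the stretching factor is $1$ because $f$ has a single simple zero or pole in the relevant residue disc, so $\log\vert f\vert$ has slope $\pm 1$ along the ray while $\varphi_{\trop}$ is constant there. You merely spell out the local model $f = T^{\pm 1}u$ on the disc, which the paper leaves implicit in the phrase ``since $f$ has only simple poles and zeros.''
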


\begin{proof}
The edge $e$ has to be infinite since we only added infinite rays to $\Sigma(\varphi)$ 
in Construction \ref{higher tropicalization}. 
Since $f$ has only simple poles and zeros, the slope of $\log \vert f \vert$ along $e$ is equal to one, 
thus the corresponding expansion factor equals one. 
\end{proof}


\subsection{Fully faithful tropicalization} \label{section ff trop}

Throughout this section, $X$ is a Mumford curve
and $\varphi \colon X \to Y$ a closed embedding of $X$ into a toric variety that meets the dense torus. 
In this section, we prove Theorem \ref{thmB} from the introduction, showing that $\varphi$ has 
a refinement that is fully faithful. 

We fix a minimal semistable vertex set $V$ and denote by $\Gamma$ the corresponding finite skeleton of $X$ 
with retraction $\tau$. 
For our completed skeleton $\Sigma(\varphi)$ associated to $\varphi$
we denote the retraction by $\tau_{\varphi}$ and the finite part by $\Sigma(\varphi)_{\fin}$.

We will now construct for an edge $e$ a function $f_e \in K(X)^*$ such that the slope
of $\log \vert f_e \vert$ is equal to $1$ along $e$ and such that $f_e$ is faithful
with respect to $\Sigma(\varphi)$.

\begin{Const} \label{construction finite edge}
Let $e$ be a finite edge of $\Sigma(\varphi)$ that is not in $\Gamma$.
We label the endpoints $v$ and $w$ in such a way that $w$ and $\Gamma$ 
lie in different connected components of $\Sigma(\varphi) \setminus v$
(see Figure \ref{figure finite edges}).
Let $v', w' \in X(K)$ be such that $\tau_{\varphi}(v') = v$ and $\tau_{\varphi}(w') = w$.
We fix edges $e_1,\dots,e_g$ that form the complement of a spanning tree of $\Sigma(\varphi)$ and
pillar points $p_{ij}^e$ in $e_i$. 
Applying Corollary \ref{lifting corollary} to $\Sigma(\varphi)_{\fin}$ 
and $D' = v' - w'$ we obtain $f_e \in K(X)^*$ such that
$\divisor(f_e) = v' - w' + \sum \pm x_{ij}^e$. 
By construction $f_e$ is faithful with respect to $\Sigma(\varphi)$ and 
the slope of $\log \vert f_e \vert$ along $e$ is $1$. 
Replacing $f_e$ by $a^{-1} \cdot f_e$ where $a \in K$ such that $\vert f_e(v) \vert = \vert a \vert $ 
we may assume $\log \vert f_e (v) \vert = 0$. 

\end{Const}

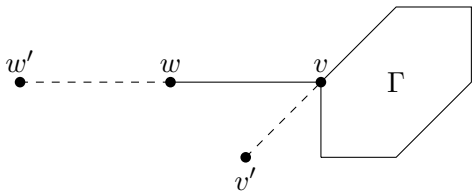
\begin{figure} 
\begin{tikzpicture}
\draw  (0,0) -- ++(4,0) -- (5,1) -- (5,2) -- (1,2) -- (0,1) -- (0,0);
\draw (-2, 1) -- (0,1);
\node[above] at (0,1) {$v$}; \fill (0,1) circle (2pt);
\node[above] at (-2,1) {$w$}; \fill (-2,1) circle (2pt);
\node[above] at (-1,1) {$e$}; \fill (-2,1) circle (2pt);
\draw[dashed] (-4,1) -- (-2,1); 
\draw[dashed] (-1,0) -- (0,1);
\node[above] at (-4,1) {$w'$}; \fill (-4,1) circle (2pt);
\node[below] at (-1,0) {$v'$}; \fill (-1,0) circle (2pt);
\node[above] at (1.5,0) {$e_1$};
\node[below] at (.2,0) {$p^e_{1,1}$}; \fill (.2,0) circle (2pt);
\node[below] at (.9,0) {$p^e_{1,2}$};  \fill (.9,0) circle (2pt);
\node[below] at (2.2,0) {$p^e_{1,3}$};  \fill (2.2,0) circle (2pt);
\node[below] at (2.9,0) {$p^e_{1,4}$};  \fill (2.9,0) circle (2pt);
\node at (2.5,1) {$\Gamma$};
\end{tikzpicture}
\caption{Situation in Construction \ref{construction finite edge}. 
The dashed lines are infinite edges and solid lines are finite edges.}
\label{figure finite edges}
\end{figure}

\begin{Const} \label{construction infinite edge}
Let $e$ be an infinite edge of $\Sigma(\varphi)$
with finite vertex $v$ and infinite vertex $w'$. 
Let $v'$ be a point in $X(K)$ such that $\tau_{\varphi}(v') = v$
(see Figure \ref{figure infinite edge}). 
We fix edges $e_1,\dots,e_g$ that form the complement of a spanning tree of $\Sigma(\varphi)_{\fin}$
and pillar points $p_{ij}^e$ in $e_i$.  
Applying Corollary \ref{lifting corollary} to $\Sigma(\varphi)_{\fin}$ and $D = v' - w'$ we obtain $f_e \in K(X)^*$
that is faithful with respect to $\Sigma(\varphi)$ and such that $\log \vert f_e \vert$ 
has slope $1$ along $e$. 
We again normalize such that $\log \vert f_e(v) \vert = 0$. 
\end{Const}

\begin{figure}
\begin{tikzpicture}
\draw  (0,0) -- ++(4,0) -- (5,1) -- (5,2) -- (1,2) -- (0,1) -- (0,0);
\node[above] at (0,1) {$v$}; \fill (0,1) circle (2pt);
\draw[dashed] (-4,1) -- (0,1); 
\draw[dashed] (-1,0) -- (0,1);
\node[above] at (-2,1) {$e$};
\node[above] at (-4,1) {$w'$}; \fill (-4,1) circle (2pt);
\node[below] at (-1,0) {$v'$}; \fill (-1,0) circle (2pt);
\node[above] at (1.5,0) {$e_1$};
\node[below] at (.2,0) {$p^e_{1,1}$}; \fill (.2,0) circle (2pt);
\node[below] at (.9,0) {$p^e_{1,2}$};  \fill (.9,0) circle (2pt);
\node[below] at (2.2,0) {$p^e_{1,3}$};  \fill (2.2,0) circle (2pt);
\node[below] at (2.9,0) {$p^e_{1,4}$};  \fill (2.9,0) circle (2pt);
\node at (2.5,1) {$\Gamma$};
\end{tikzpicture}
\caption{Situation in Construction \ref{construction infinite edge}.
The dashed lines are infinite edges and solid lines are finite edges.}
\label{figure infinite edge}
\end{figure}
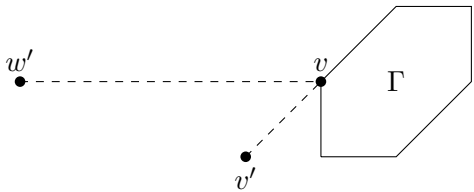

\begin{satz} \label{prop edge}
Let $X$ be a Mumford curve. 
Let $\varphi \colon X \to Y$ be a closed embedding of $X$ into a toric variety that meets the dense torus.
Then there exists a refinement $\varphi' \colon X \to Y'$ for $\varphi$ that is fully faithful. 
\end{satz}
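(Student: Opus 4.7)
The plan is to build $\varphi'$ by tacking on one extra $\Pbb^1$-coordinate per edge of $\Sigma(\varphi)$, namely setting $\varphi' := (\varphi, f_{e_1}, \ldots, f_{e_N}) \colon X \to Y \times (\Pbb^1)^N$, where $e_1, \ldots, e_N$ enumerate the edges of $\Sigma(\varphi)$ and each $f_{e_i} \in K(X)^*$ is constructed to be (i) faithful with respect to $\Sigma(\varphi)$ and (ii) to have $\log|f_{e_i}|$ of slope $1$ along $e_i$. For a finite edge $e_i$ not contained in $\Gamma$, Construction \ref{construction finite edge} delivers such an $f_{e_i}$ via Corollary \ref{lifting corollary} applied to the dipole $v'-w'$ over the endpoints of $e_i$; for an infinite edge one uses Construction \ref{construction infinite edge}. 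For a finite edge $e_i$ lying inside $\Gamma$ the dipole is no longer principal on $\Sigma(\varphi)_{\fin}$, so one invokes Theorem \ref{lifting theorem} directly: I would pick lifts $v',w'$ of the endpoints and enlarge $v'-w'$ to a degree-$g$ divisor $D'$ whose image $\tau_*D'$ differs from a break divisor $B$ supported on two-valent pillar points of the complement of a spanning tree by an element of $\Prin(\Gamma)$, then lift via Theorem \ref{lifting theorem} to obtain the desired $f_{e_i}$.

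Once all $f_{e_i}$ are in place, iterating Construction \ref{higher tropicalization} describes the new completed skeleton explicitly: $\Sigma(\varphi')$ is $\Sigma(\varphi)$ together with one infinite leaf ray $[x, \tau_\varphi(x)]$ for every zero or pole $x$ of every $f_{e_i}$. Full faithfulness must then be checked via the two conditions of Proposition \ref{prop defn fully faithful}. The weight-one requirement splits naturally: on a new infinite ray it follows from Lemma \ref{lem new edges}, and on an old edge $e$ of $\Sigma(\varphi)$ the $f_e$-coordinate contributes slope $1$, so the gcd of the integer slopes of all coordinate functions along $e$ is $1$ and hence the stretching factor equals $1$.

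The hard step is to verify that $\varphi'_{\trop}$ is injective on $\Sigma(\varphi')$. Two distinct points lying in the relative interior of a single edge $e$ are separated by $\log|f_e|$, which is strictly monotone there. Two distinct points sitting on different edges are separated using faithfulness of the $f_{e_i}$ with respect to $\Sigma(\varphi)$, which guarantees that the attachment points of the new infinite rays are pairwise distinct in $\Sigma(\varphi)$; this allows one to read off from $\varphi'_{\trop}$ both the $\Trop(Y)$-base and the unique $\Pbb^1$-factor tending to $\pm\infty$, and hence which ray the point lies on. The genuinely delicate issue, which I expect to be the main obstacle, is ruling out collisions among interior points of two distinct old edges: pillar-point contributions of the various $f_{e_j}$ can produce non-trivial slopes on other edges, so the divisors $D'$ and break divisors $B$ underlying each lifting must be tuned with care so that the coordinate functions collectively separate all of $\Sigma(\varphi)$. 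Once this is achieved, Proposition \ref{prop defn fully faithful} immediately yields that $\varphi'_{\trop}$ is fully faithful.
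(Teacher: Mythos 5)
Your overall strategy (one new $\Pbb^1$-coordinate per problematic edge, functions produced by the lifting machinery, weights controlled by slope~$1$, Lemma \ref{lem new edges} for the new infinite rays) matches the paper's, but there are two genuine gaps. First, and most importantly, the global injectivity of $\varphi'_{\trop}|_{\Sigma(\varphi')}$ --- which you yourself flag as ``the main obstacle'' and leave as something to be ``tuned with care'' --- is precisely the content of the paper's proof, not a routine verification. The paper resolves it by imposing explicit conditions on the pillar points (the segments $[p^e_{i,1},p^e_{i,4}]$ are pairwise disjoint over all pairs $(e,i)$, and $\varphi_{\trop}([p^e_{i,1},p^e_{i,4}])$ is disjoint from $\varphi_{\trop}(e)$), normalizing $F_e(v_e)=0$, and then computing $\supp(F_e)$ exactly: it is the union of the pillar segments with all edges $e'\geq e$ in a partial order on the set $E$ of new edges (``$e'$ lies beyond $e$ as seen from $\Gamma$''). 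The injectivity argument is then a case analysis on the set $E'=\{e : F_e(z_1)\neq 0\}$, using that $E'$ is downward closed. Without this (or an equivalent) separation mechanism, your claim that ``faithfulness of the $f_{e_i}$ with respect to $\Sigma(\varphi)$'' suffices to distinguish points on different edges does not go through: two points on distinct old edges can a priori have equal $\varphi_{\trop}$-image and equal values under every $F_{e_j}$.

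Second, your treatment of edges lying inside $\Gamma$ is both unnecessary and does not work as stated. The paper sidesteps these edges entirely by first invoking the faithful tropicalization theorem \cite[Theorem 1.1]{BPR} to replace $\varphi$ by a refinement for which $\varphi_{\trop}|_{\Gamma}$ is already an isometry onto its image; only the edges of $\Sigma(\varphi)$ \emph{not} in $\Gamma$ (which form trees hanging off $\Gamma$) receive new coordinates. For such an edge $e$ the endpoint divisor $v-w$ is principal on $\Sigma(\varphi)_{\fin}$ because $e$ is a bridge, and the associated function automatically has slope $1$ along $e$ with support confined to the branch beyond $e$. For an edge $e$ inside a cycle of $\Gamma$, correcting $v-w$ by a break divisor and lifting via Theorem \ref{lifting theorem} produces \emph{some} rational function, but its slopes are dictated by the graph Laplacian and there is no reason the slope along $e$ equals $1$; moreover its support wraps around cycles, which would destroy the tree-order argument needed for injectivity. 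So you should either import the BPR result for $\Gamma$ as the paper does, or design the piecewise linear functions on $\Gamma$-edges from pillar points inside the edge rather than from the endpoint dipole.
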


\begin{proof}
Recall that we fixed a finite skeleton of $\Gamma$ of $X$. 
By \cite[Theorem 1.1]{BPR} we may assume, after possibly replacing $\varphi$ by a refinement, 
that the map $\varphi_{\trop}\vert_\Gamma$ is an isometry onto its image. 

Let $E$ be the set of edges of $\Sigma(\varphi)$ that are not in $\Gamma$. 
The strategy of proof will be as follows: For each edge $e \in E$, 
we apply Construction \ref{construction finite edge} (if $e$ is a finite edge) or 
Construction \ref{construction infinite edge} (if $e$ is an infinite edge). 
We make sure that the pillar points we choose to apply these constructions do not get 
in the way of each other (condition $iii)$ below) and do not interfere with $e$ after tropicalization
(condition $ii)$ below). 
This yield a rational function $f_e$ for each $e \in E$. 
We then check that the corresponding embedding 
$(\varphi, (f_e)_{e \in E}) \colon X \to Y \times (\mathbb{P}^1)^{\vert E \vert}$ 
is fully faithful. 

For each $i = 1,\dots,g$, $j=1,\dots,4$ and $e \in E$ we pick $p_{ij}^e \in \Gamma$ such that
\begin{enumerate}
\item
for all $e \in E$ there are edges $e^e_i$, $i = 1, \dots, g$, 
that form the complement of a spanning tree of $\Sigma(\varphi)$ and  
$p_{i,1}^e,\dots,p_{i,4}^e$ are pillar points in $e^e_{i}$;
\item
$\varphi_{\trop}([p_{i,1}^e, p_{i,4}^e]) \cap \varphi_{\trop}(e) = \emptyset$ for all $i = 1,\dots, g$;
\item
$[p_{i,1}^e, p_{i,4}^e] \cap [p_{i',1}^{e'}, p_{i',4}^{e'}] = \emptyset$ for $(e,i) \neq (e', i')$. 
\end{enumerate}
Note that a choice of $p_{ij}^e$ that satisfies ii) is possible since $\varphi_{\trop}(e)$ is a line segment, 
thus cannot cover a full cycle of $\Gamma$. 

Now for all finite (resp. infinite) edges $e \in E$ 
we apply Construction \ref{construction finite edge} (resp. Construction \ref{construction infinite edge}) 
and obtain functions $f_e \in K(X)^*$.

We consider the closed embedding 
\begin{align*}
\varphi' := (\varphi, (f_e)_{e \in E}) \colon X \to Y \times (\Pbb^1)^{\vert E \vert}.
\end{align*}
Following Construction \ref{higher tropicalization}, the completed skeleton 
$\Sigma(\varphi')$ associated to $\varphi'$ is obtained from $\Sigma(\varphi)$ 
by attaching an infinite edge $e_{ij}^e$ at each 
$p_{ij}^e$ and by attaching for each $e \in E$ infinite edges to its finite endpoints. 
If $e = [v, w]$, we denote these edges by $e_v^e$ and $e_w^e$ respectively. 
We claim that the map 
\begin{align*}
\varphi'_{\trop} \colon \Sigma(\varphi') \to \Trop(Y) \times \Trop(\Pbb^1)^E
\end{align*}
is injective. 
We denote $F_e :=  \log \vert f_e \vert \big \vert_{\Sigma(\varphi')}$. 
By construction, $\varphi'_{\trop}$ is injective when restricted to an edge, 
since $\varphi_{\trop}|_\Gamma$ is injective and 
$F_e$ is injective when restricted to $e$ and $e_{ij}^e$ for $e \in E$. 

To show global injectivity, let us set up some notation. 
Recall that for each edge $e \in E$ we denote by $v_e$ the endpoint of $e$ such that $\Gamma$ and 
$\mathring e$ lie
in different connected components of $\Gamma \setminus v$ and by $w_e$ the other endpoint. 
Furthermore, $f_e$ was normalized in such a way that $F_e (v_e) = 0$. 
Recall that $\Gamma$ is a deformation retract of $\Sigma(\varphi')$. 
Thus, we may define a partial order on $E$ by declaring $e \leq e'$ if ``$e$ is closer to $\Gamma$ 
then $e'$'', meaning that $\mathring e$ and $\mathring e'$ lie in the same connected component 
of $\Sigma(\varphi') \setminus v_e$. 

The idea of the proof of injectivity is that for a point $z \in \Sigma(\varphi') \setminus \Gamma$ 
we can detect in which edge $e \in E$ it is contained simply by looking at the 
set of functions $F_e$ satisfying $F_e(z) \neq 0$. 
We then do a case by case analysis of the situation.

Now assume $\varphi'_{\trop}(z_1) = \varphi'_{\trop}(z_2)$ for $z_1,z_2 \in \Sigma(\varphi')$. 
This means that $\varphi_{\trop}(z_1) = \varphi_{\trop}(z_2)$ and 
$F_e(z_1)  = F_e(z_2)$ for all $e \in E$. 

Note that we may assume $z_1 \notin \Gamma$, since if both $z_1$ and $z_2$ are in $\Gamma$, 
then we are done since $\varphi_{\trop}$ is already injective on $\Gamma$. 
Denote 
\begin{align*}
E' := \{ e \in E \mid F_e(z_1)  \neq 0 \} = \{ e \in E \mid F_e(z_2) \neq 0 \}.
\end{align*} 
Since $F_e(v_e) = 0$ and 
$\divisor(F_e) = v_e - w_e + \sum_{i=1}^g (p_{i,1}^e - p_{i,2}^e - p_{i,3}^e +  p_{i,4}^e)$  we have 
$F_e(v_e) = 0$, $F_e(w_e) > 0$, $F_e(p_{i,1}^e) = F_e(p_{i,4}^e) = 0$ and 
$F_e$ is constant on the connected components 
of $\Sigma(\varphi') \setminus (e \cup [p_{i,1}^e, p_{i,4}^e])$ (see Figure \ref{figure graph}). 
Thus
\begin{align} \label{eq:fe}
\supp(F_e) = \bigcup_{e' \geq e} e' \cup \bigcup_{i=1}^g [p^e_{i1}, p^e_{i4}].  
\end{align}
We deduce that $E'$ is closed under $\leq$ and  
non-empty since $z_1 \notin \Gamma$. 

If $\vert E' \vert = 1$, say $E' = \{e \}$, then 
\begin{align} \label{eq zi contained}
z_1 \in e \cup \bigcup_{ij} e_{ij}^e \text{ and }  z_2 \in e \cup \bigcup_{ij} e_{ij}^e \cup 
\bigcup_{i} [p_{i,1}^e, p_{i,4}^e]. 
\end{align}
In the case $z_1 \in e$, we have $\varphi_{\trop}(z_2) = \varphi_{\trop}(z_1) \in \varphi_{\trop}(e)$ 
which forces $z_2 \in e$ by $ii)$ above and (\ref{eq zi contained}). 
Since $F_e  |_{e}$ is injective, it follows that $z_1 = z_2$. 

In the case $z_1 \in e_{ij}^e$, we have $\varphi_{\trop}(z_2) = \varphi_{\trop}(z_1) = p_{ij}^e$, 
thus $z_2 \in e_{ij}^e$ and because $F_e|_{e_{ij}^e}$ is injective 
we have $z_1 = z_2$.

If $\vert E' \vert > 1$, then there exists $e \in E$ such that 
$E' = \{ e' \in E \mid e' \leq e\}$ by iii) above and (\ref{eq:fe}).
For the same reason $\vert E' \vert > 1$ implies $z_1, z_2 \in e$ and consequently $z_1 = z_2$.  
Thus $\varphi'_{\trop}|_{\Sigma(\varphi')}$ 
is injective. 

The stretching factor for all edges of $\Gamma$ is $1$ since $\varphi |_{e}$ is an isometry onto its image. 
For all $e \in E$, the stretching factors are equal to $1$ since the slope of $f_e$ along $e$ is $1$. 
For all $e^e_{ij}$ the stretching factor is equal to $1$ by Lemma \ref{lem new edges}. 
Since $\varphi'_{\trop}|_{\Sigma(\varphi')}$ in injective this means all weights are equal to $1$. 
Thus $\varphi'_{\trop}$ represents $\Sigma(\varphi')$ faithfully. 
\end{proof}

\begin{figure}
\begin{tikzpicture}
\draw (-1,3) --  ++(1,0) -- ++(1,1) -- ++(2,0) -- ++(1,-1) -- ++(1,0);
\draw[dashed] (0,3) -- ++(0,-1); 
\draw[dashed] (1,4) -- ++(0,1); 
\draw[dashed] (3,4) -- ++(0,1);
\draw[dashed] (4,3) -- ++(0,-1);
\node [above] at (0,3) {$p_{i,1}^e$}; \fill (0,3) circle (2pt);
\node [below] at (1,4) {$p_{i,2}^e$}; \fill (1,4) circle (2pt);
\node [below] at (3,4) {$p_{i,3}^e$}; \fill (3,4) circle (2pt);
\node [above] at (4,3) {$p_{i,4}^e$}; \fill (4,3) circle (2pt);
\node[right] at (0,2.5) {$e_{i,1}^e$};
\node[right] at (1,4.5) {$e_{i,2}^e$};
\node[right] at (3,4.5) {$e_{i,3}^e$};
\node[right] at (4,2.5) {$e_{i,4}^e$};
\end{tikzpicture}
\caption{The graph of $\log \vert f_e \vert$ on $e_{i}^e$ and the 
adjacent edges $e_{ij}^e$ in $\Sigma(\varphi')$.}
\label{figure graph}
\end{figure}
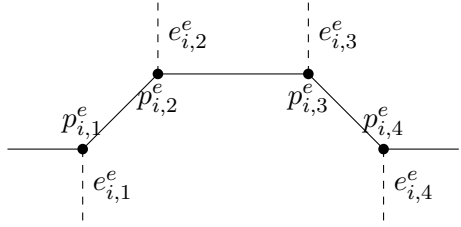

\begin{kor} \label{cor section}
Let $\varphi \colon X \to Y$ be a closed embedding of $X$ into a toric variety $Y$ that meets the dense torus. 
Then there exists a refinement $\varphi'$ of $\varphi$ and a 
section $\psi_{\varphi'} \colon \Trop_{\varphi'}(X) \to \Xan$ for $\varphi'_{\trop}$.
\end{kor}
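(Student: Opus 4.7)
The plan is to deduce this directly from Theorem \ref{prop edge}. By that theorem, after possibly replacing $\varphi$ by a refinement, we may assume there exists a refinement $\varphi' \colon X \to Y'$ that is fully faithful. By Proposition \ref{prop defn fully faithful}\,i), the restriction $\varphi'_{\trop}|_{\Sigma(\varphi')} \colon \Sigma(\varphi') \to \varphi'_{\trop}(\Sigma(\varphi'))$ is a homeomorphism.

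Next, I would verify that the image of $\varphi'_{\trop}|_{\Sigma(\varphi')}$ equals all of $\Trop_{\varphi'}(X)$. This uses the factorization diagram \eqref{dia factorization}: since $\varphi'_{\trop} = \varphi'_{\trop}|_{\Sigma(\varphi')} \circ \tau_{\varphi'}$ and $\tau_{\varphi'} \colon \Xan \to \Sigma(\varphi')$ is surjective (being a retraction), we obtain
\begin{align*}
\varphi'_{\trop}(\Sigma(\varphi')) = \varphi'_{\trop}(\tau_{\varphi'}(\Xan)) = \varphi'_{\trop}(\Xan) = \Trop_{\varphi'}(X).
\end{align*}
Hence $\varphi'_{\trop}|_{\Sigma(\varphi')}$ is a homeomorphism onto $\Trop_{\varphi'}(X)$.

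Finally, I would define
\begin{align*}
\psi_{\varphi'} \colon \Trop_{\varphi'}(X) \xrightarrow{(\varphi'_{\trop}|_{\Sigma(\varphi')})^{-1}} \Sigma(\varphi') \hookrightarrow \Xan,
\end{align*}
and check the section property. For $t \in \Trop_{\varphi'}(X)$, let $z = (\varphi'_{\trop}|_{\Sigma(\varphi')})^{-1}(t) \in \Sigma(\varphi')$; then $\varphi'_{\trop}(\psi_{\varphi'}(t)) = \varphi'_{\trop}(z) = \varphi'_{\trop}|_{\Sigma(\varphi')}(z) = t$, so $\varphi'_{\trop} \circ \psi_{\varphi'} = \id_{\Trop_{\varphi'}(X)}$.

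There is essentially no obstacle: the whole work has been done in Theorem \ref{prop edge}. The only thing to notice is that full faithfulness gives injectivity on $\Sigma(\varphi')$, and combined with the retraction diagram \eqref{dia factorization} this automatically upgrades to a bijective, hence (by compactness and Hausdorffness) homeomorphic, identification of $\Sigma(\varphi')$ with $\Trop_{\varphi'}(X)$.
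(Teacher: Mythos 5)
Your proposal is correct and follows exactly the paper's argument: apply Theorem \ref{prop edge} to obtain a fully faithful refinement, note that $\varphi'_{\trop}|_{\Sigma(\varphi')}$ is then a homeomorphism onto $\Trop_{\varphi'}(X)$, and define $\psi_{\varphi'}$ as the inverse followed by the inclusion $\Sigma(\varphi') \hookrightarrow \Xan$. The extra surjectivity check via the factorization diagram \eqref{dia factorization} is a detail the paper leaves implicit, but it is the right justification.
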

\begin{proof}
By Theorem \ref{prop edge} we can choose $\varphi'$ such that $\varphi'_{\trop}$ is fully faithful. 
Thus $\varphi'_{\trop}\vert_{\Sigma(\varphi')}$ is a homeomorphism and we 
define $\psi_{\varphi'}$ as the composition of the inclusion of $\Sigma(\varphi')$
into $\Xan$ with $(\varphi'_{\trop}\vert_{\Sigma(\varphi')})^{-1}$. 
\end{proof}


\subsection{Smooth tropicalization} \label{section smooth trop}

Throughout this section, we will work in the following situation: 
$X$ is a Mumford curve over $K$ and $\varphi \colon X \to Y$ a closed embedding that meets the dense torus
such that $\varphi_{\trop}$ is fully faithful. 
We denote by $\Sigma(\varphi)$ the associated complete skeleton and by $\tau_\varphi$ the retraction.

\begin{lem} \label{less singular points}
Let $f \in K(X)^*$ that is faithful with respect to $\Sigma(\varphi)$.
Then $\varphi' = (\varphi, f) \colon X \to Y \times \Pbb^1$ is fully faithful. 
Further, all vertices in $\Sigma(\varphi')$ that map to singular vertices in $\Trop_{\varphi'}(X)$
are contained in $\Sigma(\varphi)$ and map to singular vertices in $\Trop_{\varphi}(X)$.
\end{lem}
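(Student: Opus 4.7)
The plan is to deduce both claims from the explicit description of $\Sigma(\varphi')$ given by Construction \ref{higher tropicalization} together with the faithfulness hypothesis on $f$, which ensures that the new infinite rays of $\Sigma(\varphi')$ are attached at pairwise distinct points $\tau_\varphi(x_i)$ of $\Sigma(\varphi)$.

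First I would verify that $\varphi'$ is fully faithful by checking the two conditions of Proposition \ref{prop defn fully faithful}. Injectivity of $\varphi'_{\trop}|_{\Sigma(\varphi')}$ follows along the lines of the injectivity argument in the proof of Theorem \ref{prop edge}: the projection $\pi_1 \colon \Trop(Y) \times \Trop(\Pbb^1) \to \Trop(Y)$ satisfies $\pi_1 \circ \varphi'_{\trop} = \varphi_{\trop}$, so any collision $\varphi'_{\trop}(z_1) = \varphi'_{\trop}(z_2)$ forces $\varphi_{\trop}(z_1) = \varphi_{\trop}(z_2)$; on $\Sigma(\varphi)$ the latter already implies $z_1 = z_2$ by fully faithfulness of $\varphi_{\trop}$, and on the new infinite rays the only candidate collisions are ruled out because the rays are attached at distinct points. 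For the weight condition, every edge of $\Sigma(\varphi')$ that lies in $\Sigma(\varphi)$ has primitive tangent direction $(w_e,s_e)$ in $N \oplus \Z$, where $w_e$ is the already primitive direction under $\varphi_{\trop}$ and $s_e$ is the slope of $\log \vert f \vert$ along $e$; primitivity of $w_e$ implies primitivity of $(w_e,s_e)$, so the stretching factor remains $1$. On the new infinite rays Lemma \ref{lem new edges} provides weight $1$ directly.

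For the singular-vertex statement I would argue by contrapositive. If $v' \in \Sigma(\varphi') \setminus \Sigma(\varphi)$, then $v'$ is an infinite vertex corresponding to some $x_i \in \supp(\divisor(f))$, hence of valence one, which makes it smooth by Definition \ref{defn smooth}; so singular vertices necessarily live in $\Sigma(\varphi)$. Now assume $v' \in \Sigma(\varphi)$ with $\varphi_{\trop}(v')$ smooth in $\Trop_\varphi(X)$, and let me show $\varphi'_{\trop}(v')$ is smooth in $\Trop_{\varphi'}(X)$. Crucially, faithfulness of $f$ with respect to $\Sigma(\varphi)$ guarantees that at most one new ray is attached at $v'$; its tangent direction in $N \oplus \Z$ is $(0,\pm 1)$. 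Writing $w_e$ and $s_e$ for the outgoing primitive direction and the slope of $\log\vert f\vert$ along each edge $e$ of $\Sigma(\varphi)$ at $v'$, smoothness of $\varphi_{\trop}(v')$ says that the $w_e$ span a saturated sublattice of rank $\val(v')-1$ in $N$. A routine linear-algebra step then shows that $\{(w_e,s_e)\}$, augmented if necessary by $(0,\pm 1)$, spans a saturated sublattice of $N \oplus \Z$ of rank $\val_{\Sigma(\varphi')}(v')-1$: any integer relation on the projection to $N$ lifts by saturation, and the last coordinate is then absorbed using the primitive vector $(0,\pm 1)$ when it is present.

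The main obstacle is the saturation step, and it is precisely here that the faithfulness assumption is indispensable: if two or more poles or zeros of $f$ retracted to the same point $v'$, the valence at $v'$ would grow by more than one while only the single direction $(0,\pm 1)$ would be added, destroying the rank/valence balance required for smoothness. Faithfulness keeps this bookkeeping tight and makes the smoothness transfer go through.
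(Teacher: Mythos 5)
Your first half (full faithfulness of $\varphi'$) is correct and essentially the paper's own argument: injectivity of $\varphi'_{\trop}|_{\Sigma(\varphi')}$ comes from injectivity of $\varphi_{\trop}|_{\Sigma(\varphi)}$ together with the fact that the new rays are attached at pairwise distinct points and $\log\vert f\vert$ has slope $\pm 1$ along them, and the stretching factors are $1$ on old edges because $(w_e,s_e)$ is primitive whenever $w_e$ is, and on new edges by Lemma \ref{lem new edges}. That part needs no change.

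The gap is in the smoothness transfer at a vertex $v\in\Sigma(\varphi)$ at which \emph{no} new ray is attached, i.e.\ the case where the vector $(0,\pm 1)$ is not available. Your ``routine linear-algebra step'' is only justified in the augmented case: there the span of the lifted directions contains $(0,\pm 1)$, hence equals $L\oplus\Z$ for $L=\langle w_{v,e}\rangle_\Z$, and saturation and the rank count are immediate. In the unaugmented case the statement you need --- that $\{(w_e,s_e)\}$ spans a saturated lattice of rank $\val(v)-1$ --- is simply false for arbitrary integer slopes: at the trivalent smooth vertex with directions $(1,0),(0,1),(-1,-1)$ and slopes $1,1,1$ the lifted vectors $(1,0,1),(0,1,1),(-1,-1,1)$ span a rank-$3$ lattice, so the rank/valence balance fails. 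What rules such slopes out, and what your argument never invokes, is that ``no new ray at $v$'' means exactly $v\notin\supp\bigl((\tau_\varphi)_*\divisor(f)\bigr)$, i.e.\ the outgoing slopes of $\log\vert f\vert$ at $v$ sum to zero. Combined with smoothness of $\varphi_{\trop}(v)$ and the weights being $1$ (so $\sum_i w_{v,e_i}=0$ by balancing, and any $\val(v)-1$ of the $w_{v,e_i}$ form a basis of the saturated lattice $L$), this zero-sum condition shows that $s_{e_i}=\langle m,w_{v,e_i}\rangle$ for some integral $m\in M$; equivalently, $\log\vert f\vert$ is near $\varphi_{\trop}(v)$ the restriction of an integral affine function, which is exactly how the paper argues this case. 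Then the lifted span is the graph of $m$ over $L$, hence saturated of rank $\val(v)-1$, and smoothness follows. Without this balancing input your unaugmented case has no proof, so you should add this step.
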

\begin{proof}
All edges of $\Sigma(\varphi')$ that are not edges of $\Sigma(\varphi)$ 
have expansion factor equal to $1$ by Lemma \ref{lem new edges}. 
Since $f$ is faithful all these edges 
have different images under $\tau_{\varphi}$. 
Since $\varphi_{\trop}$ is fully faithful, they have different images under $\varphi'_{\trop}$. 
Consequently $\varphi'_{\trop}|_{\Sigma(\varphi')}$ is injective. 
Thus $\varphi'_{\trop}$ is fully faithful. 

Let $v$ be a vertex of $\Sigma(\varphi')$.
Then $v$ is a vertex in $\Sigma(\varphi)$ (after potential subdivision) or infinite.
Since $\varphi'_{\trop}$ is fully faithfully, 
the infinite vertices of $\Trop_{\varphi'}(X)$ have only one adjacent vertex 
and are thus smooth.  
Thus we have to show that if $\varphi_{\trop}(v)$ is a smooth finite vertex of $\Trop_{\varphi}(X)$, 
then $\varphi'_{\trop}(v)$ is a smooth vertex of $\Trop_{\varphi'}(X)$. 

Let $e_0,\dots,e_n$ be the adjacent edges of $v$ and write $w_i := w_{v, e_i}$ for
the primitive integral vector pointing from $\varphi_{\trop}(v)$ into $\varphi_{\trop}(e_i)$. 
We denote $F =  \log \vert f \vert \big \vert_{\Sigma(\varphi')}$ and $L(F) = F - F(v)$.  

If $v$ is not in $\divisor(F)$, then $F$ is locally around $\varphi_{\trop}(v)$ 
the restriction  to $\Trop_\varphi(X)$ of an affine function on $N_\R$.
The vertex $v$ still has $n+1$ adjacent edges $e'_0,\dots,e'_n$ in $\Sigma(\varphi')$ and the 
primitive vectors are $w_0' = (w_0,L(F)(e_0)),\dots,w'_n = (w_n,L(F)(e_n))$. 
Since $F$ has integer slopes and is the restriction of an affine function and the $w_i$ span
a saturated lattice of rank $n$, so do the $w'_i$, 
which shows that $\varphi_{\trop}(v)$ is a smooth vertex of $\Trop_{\varphi'}(X)$. 

If $v \in \divisor(F)$, since $f$ is faithful with respect to $\Sigma(\varphi)$, 
$v$ has $n+2$ adjacent edges $e'_0,\dots,e'_n,e'_{n+1}$ 
 in $\Sigma(\varphi')$ and the primitive vectors are
\begin{align*}
w_0' = (w_0,L(F)(e_0)),\dots,w'_n = (w_n,L(F)(e_n)), w'_{n+1} = (0, \pm 1). 
\end{align*}
Since the $w_i$ span a saturated lattice of rank $n$, 
the $w'_i$ span a saturated lattice of rank $n+1$, 
which shows that $\varphi'_{\trop}(v)$ is a smooth vertex of $\Trop_{\varphi'}(X)$. 
\end{proof}

For a vertex $v$ of $\Sigma(\varphi)$ and two adjacent edges $e_0$ and $e_1$, we 
now construct a function $f_{e_1}$ in $K(X)^*$ that we will use to
construct a tropicalization that is smooth at $v$. 
This may be viewed as generalization to any ambient dimension and any vertex 
of the constructions done for special vertices and ambient dimension $2$ by 
Cueto and Markwig \cite[Section 3]{CuetoMarkwig} 

\begin{Const} \label{new construction vertex}
Let $v$ be a vertex of $\Sigma(\varphi)$ and let $e_0$ and $e_1$ be adjacent edges. 
Let $F_{e_1}$ be a piecewise linear function such that $F_{e_1}(v) = 0$, $d_{e_1}F_{e_1}(v) = 1$, 
$d_{e_0}F_{e_1}(v) = -1$, $d_{e}F_{e_1} = 0$ for all other adjacent edges, 
$\supp(F_{e_1}) \subset e_1 \cup e_0$, and such that 
$\divisor(F_{e_1}) = \sum \pm p_{i}$ for distinct points $p_i$ (see Figure \ref{figure resolution}). 
For each $i$ fix $x_{i} \in X(K)$ such that $\tau_* x_{i} = p_i$ and write $D = \sum \pm x_{i}$.
Fixing pillar points $p_{jk}$ outside of $\supp(F)$ and 
applying Corollary \ref{lifting corollary} we obtain a function $f_{e_1} \in K(X)^*$
that is faithful with respect to $\Sigma(\varphi)$ and 
such that the outgoing slope at $v$ of $\log \vert f_{e_1} \vert$ equals $1$ along $e_1$ and $-1$ along $e_0$. 
\end{Const}

\begin{figure}
\begin{tikzpicture}
\draw  (0,0) -- (1,1) -- (3,1) -- (4,0) -- (5,0);
\draw  (0,0) -- (-1,-1) -- (-3,-1) -- (-4,0) -- (-5,0);
\draw[dashed] (-4,0) -- (4,0);

\draw (-5,-1.3) -- (5,-1.3); 
\node[below] at (-2.5,-1.3) {$e_0$}; 
\node[below] at (2.5,-1.3) {$e_1$}; 
\node [below] at (0,-1.3) {$v$}; \fill (0,-1.3) circle (2pt);
\end{tikzpicture}

\caption{The graph of $F_{e_1}$ along the edges $e_0$ and $e_1$ in 
Construction \ref{new construction vertex}, with the dashed line
being the zero level.}
\label{figure resolution}
\end{figure}
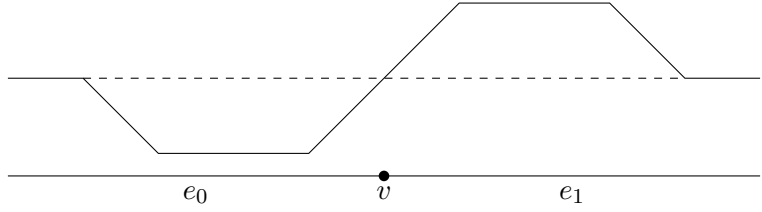

\begin{satz} \label{main theorem}
Let $\varphi \colon X \to Y$ be an embedding of $X$ into a toric variety $Y$ that meets the dense torus.
Then there exists a refinement $\varphi' \colon X \to Y'$ for a toric variety $Y'$ 
such that $\Trop_{\varphi'}(X)$ is a smooth tropical curve. 
\end{satz}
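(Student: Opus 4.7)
The plan is to reduce to the case where $\varphi_{\trop}$ is already fully faithful (using Theorem \ref{prop edge}), so that every edge of $\Trop_\varphi(X)$ has weight $1$ and hence is smooth, and then to repair the finitely many singular finite vertices one pair of edges at a time using Construction \ref{new construction vertex}. Since infinite vertices of a fully faithful tropicalization are automatically smooth (they have one adjacent edge) and Lemma \ref{less singular points} guarantees that adding a function faithful with respect to $\Sigma(\varphi)$ neither destroys full faithfulness nor creates new singular vertices, the whole procedure is monotone: the set of singular vertices shrinks at each step.

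Concretely, after replacing $\varphi$ by the fully faithful refinement produced by Theorem \ref{prop edge}, I fix a singular finite vertex $v$ of $\Sigma(\varphi)$ whose adjacent edges are $e_0,e_1,\dots,e_n$ with primitive integer directions $w_0,\dots,w_n \in N$. For each $i=1,\dots,n$ I apply Construction \ref{new construction vertex} to the pair $(e_0,e_i)$, obtaining a function $f_i := f_{e_i} \in K(X)^*$ that is faithful with respect to $\Sigma(\varphi)$ and whose restriction to $\Sigma(\varphi)$ has outgoing slope $+1$ along $e_i$, outgoing slope $-1$ along $e_0$, and outgoing slope $0$ along every other edge at $v$. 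I then consider the refinement
\[
\varphi' := (\varphi, f_1,\dots,f_n) \colon X \to Y \times (\mathbb{P}^1)^n.
\]

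By $n$ applications of Lemma \ref{less singular points}, $\varphi'_{\trop}$ is still fully faithful and every vertex of $\Sigma(\varphi')$ mapping to a singular vertex of $\Trop_{\varphi'}(X)$ already maps to a singular vertex of $\Trop_\varphi(X)$. It therefore suffices to verify that $\varphi'_{\trop}(v)$ is a smooth vertex of $\Trop_{\varphi'}(X)$. The edges of $\Sigma(\varphi')$ adjacent to $v$ that lie over $e_0,\dots,e_n$ have primitive directions
\[
w_0' = (w_0,-1,-1,\dots,-1),\qquad w_i' = (w_i, e_i^*) \text{ for } i=1,\dots,n,
\]
in $N \oplus \mathbb{Z}^n$, where $e_i^*$ is the $i$th standard basis vector. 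Projection to the last $n$ coordinates sends $\{w_1',\dots,w_n'\}$ to the standard basis of $\mathbb{Z}^n$, so these $n$ vectors are linearly independent and span a saturated sublattice of rank $n$ in $N \oplus \mathbb{Z}^n$; this is exactly the saturation condition in Definition \ref{defn smooth} at $v$ (note that any extra edges of $\Sigma(\varphi')$ adjacent to $v$ produced by zeros or poles of the $f_i$ can only sit inside $e_0 \cup e_i$ for some $i$, by the support condition built into Construction \ref{new construction vertex}, so after subdivision they do not concern $v$ itself).

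Iterating this construction over the finite list of singular vertices of $\Sigma(\varphi)$ — or equivalently, forming a single product of $\varphi$ with all the functions $f_{e_i}$ produced for all singular vertices at once — yields a refinement $\varphi'$ of $\varphi$ for which every vertex of $\Trop_{\varphi'}(X)$ is smooth, and all edges retain weight $1$ by full faithfulness; hence $\Trop_{\varphi'}(X)$ is smooth. The main obstacle is the lattice computation at $v$, but once Construction \ref{new construction vertex} delivers the prescribed slopes $+1$ and $-1$ at $v$ the saturation is automatic from the standard basis argument above; the more subtle bookkeeping is ensuring compatibility between different vertices, and here Lemma \ref{less singular points} does all the work by guaranteeing that repairs made at $v$ never undo smoothness already achieved elsewhere.
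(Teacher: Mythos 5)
Your proposal is correct and follows essentially the same route as the paper: reduce to the fully faithful case via Theorem \ref{prop edge}, apply Construction \ref{new construction vertex} to each pair $(e_0,e_i)$ at a singular vertex, verify smoothness at $v$ by the projection-to-$\mathbb{Z}^n$ lattice argument, and use Lemma \ref{less singular points} to make the count of singular vertices strictly decrease. The only detail the paper spells out that you elide is the choice of pillar points with the disjointness conditions needed so the various $f_{e_i}$ do not interfere with one another, but this is bookkeeping internal to the construction rather than a gap in the argument.
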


\begin{proof}
By Theorem \ref{prop edge}, after replacing $\varphi$ by a refinement, 
we may assume that $\varphi_{\trop}$ is fully faithful.

Let $v$ be a vertex of $\Sigma(\varphi)$ such that $\varphi_{\trop}(v)$ is a singular vertex of $\Trop_{\varphi}(X)$. 
Let $e_0,\dots,e_n$ be the adjacent edges. 
For $k=1,\dots,n$ we pick functions $F_{e_i} \colon \Sigma(\varphi) \to \R$ 
as in Construction \ref{new construction vertex}. 
For each $k = 1,\dots,n$ and $i = 1,\dots,g$  
we pick pillar points $p^k_{i,1}, p^k_{i,2}, p^k_{i,3},p^k_{i,4}$ in such a 
way that 
\begin{align*}
[p^k_{i,1}, p^k_{i,4}] &\cap \supp (F_k) = \emptyset \text{ for all } i,k \text{ and } \\
[p^{k}_{i,1}, p^k_{i,4} ] &\cap [p^{k'}_{i',1}, p^{k'}_{i',4}] = \emptyset \text{ for }(k,i) \neq (k',i'). 
\end{align*}
Applying now Construction \ref{new construction vertex}, we obtain functions $f_{e_i} \in K(X)$. 
We consider the closed embedding $\varphi' := (\varphi, (f_{e_k})_{k=1,\dots,n}) \colon X \to (\Pbb^{1})^n$ 
and its tropicalization
\begin{align*}
\varphi'_{\trop} \colon \Xan \to \Trop(Y) \times \Trop(\Pbb^1)^n.
\end{align*}
Applying Lemma \ref{lem new edges} $n$ times, we see that $\varphi'_{\trop}$
is fully faithful. 
By construction $v$ still has $n+1$ adjacent edges $e'_0,\dots,e'_n$ in $\Sigma(\varphi')$ and 
$\log \vert f_{e_i} \vert$ has 
slope $1$ along $e'_i$, slope $-1$ along $e'_0$, and is constant on the other edges. 
This means that projecting a neighborhood of $\varphi'_{\trop}(v)$ in 
$\Trop_{\varphi'}(X) \subset \Trop(Y) \times \Trop(\Pbb^1)^n$ 
to the second factor, the image is isomorphic to the one-dimensional fan in $\R^n$ 
whose rays are spanned by the coordinate vectors $x_1,\dots,x_n$ and 
their negative sum $x_0 = - \sum_{i=1}^n x_i$.  
Further the primitive vector $w_{v,e'_i}$ is mapped to $x_i$. 
Thus the $w_{v,e'_i}$ span a saturated lattice of rank $n$, which means that $v$ is smooth in $\Trop_{\varphi'}(X)$.

Since $v$ is singular in $\Trop_{\varphi}(X)$ but not in $\Trop_{\varphi'}(X)$, 
by inductively applying Lemma \ref{less singular points}, 
we see that $\Trop_{\varphi'}(X)$ 
has fewer singular points than $\Trop_{\varphi}(X)$. 

Thus inductively we can construct $\varphi'$ such that $\Trop_{\varphi'}(X)$ is smooth. 
\end{proof}

%


\section{Only Mumford curves admit smooth tropicalizations} \label{section smooth mumford}

Let $X$ be a smooth projective curve. 
In this section we show that the existence of a closed embedding $\varphi \colon X \to Y$ such 
that $\Trop_{\varphi}(X)$ is smooth already implies that $X$ is a Mumford curve. 
Since we will not change the embedding in this section, we 
will identify $X$ with its image and simply treat $X$ as a closed subcurve of $Y$. 
We denote the completed skeleton associated to the inclusion of $X$ into $Y$ by $\Sigma$. 

We denote by $K^\circ$ the valuation ring of $K$ and by $\Ktilde$ its residue field. 
Further we denote by $T$ the dense torus of $Y$, by $N$ its cocharacter lattice and 
$N_\Lambda = N \otimes \Lambda \subset N_\R$. 

We will use the notion of affinoid domains in $\Xan$ and their formal models. 
For an introduction to these notions we refer the reader to \cite[Section 3]{BPR}.

\begin{defn}
Let $w \in N_\Lambda \cap \Trop(X)$.
Then $X^w := \trop^{-1}(w)$ is an affinoid domain in $\Xan$. 
The point $w$ determines a formal model $\Xcal^w$ for $X^w$.  

The \emph{initial degeneration} is the special fiber $\inn_w(X) := \XS^w_s :=  \XS^w \otimes_{K^\circ} \Ktilde$.  
\end{defn}

\begin{bem} \label{lem initial smooth}
Assume that $\inn_w(X)$ is reduced. 
By \cite[Proposition 3.13]{BPR} we have that $\XS^w$ is 
the canonical model of $X^w$. 
Then we have a canonical \emph{reduction map} 
$\red \colon X^w \to \inn_w(X)$ \cite[Section 2.4]{BerkovichSpectral}. 
Let $C$ be an irreducible component of $\inn_w(X)$ with generic point $\eta$. 
Then there is a unique point $x_w \in X^w$ such that $\red(x_w) = \eta$ and that point satisfies 
that $C_{x_w}$ is birational to $C$ \cite[Proposition 2.4.4]{BerkovichSpectral}. 
If $z$ is a smooth closed point of $\inn_w(X)$ 
then $\red^{-1}(z)$ is isomorphic to an open disc \cite[Proposition 2.2]{BL}.

In particular, if $\inn_w(X)$ is smooth and rational, then all type II points in $X^w$ 
have genus $0$. 
\end{bem}

We will use the following Proposition. 
Since we will apply it in the case of a trivially valued field, 
we allow the absolute value of the field to be trivial. 

\begin{prop} \label{prop translate contained}
Let $T$ be an algebraic torus over a non-archimedean field, 
whose absolute value may be trivial. 
Let $T'$ be a subtorus and let $U$ be a closed subvariety of $T$.
If $\Trop(U) \subset \Trop(T')$ then a translate of $U$ that has the same tropicalization as $U$ is contained in $T'$. 
\end{prop}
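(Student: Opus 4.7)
The plan is to pass to the quotient torus $T'' := T/T'$, show that $U$ maps to a single $K$-point there, and then construct the desired translate by lifting an inverse of that point back to $T$. Let $N$ and $N'$ be the cocharacter lattices of $T$ and $T'$, so $N' \subset N$ is a saturated sublattice and $\Trop(T') = N'_\R \subset N_\R$. Denote by $\pi \colon T \to T''$ the quotient morphism and by $\pi_* \colon N_\R \to (N/N')_\R$ its tropicalization, whose kernel is precisely $N'_\R$.

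The hypothesis $\Trop(U) \subset N'_\R$ forces $\pi_*(\Trop(U)) = \{0\}$. Using the standard fact that for a torus homomorphism the tropicalization of the image of a closed subvariety equals the closure of the image of its tropicalization, I would deduce $\Trop(\overline{\pi(U)}) = \{0\}$. By Bieri-Groves, the (pure) dimension of $\overline{\pi(U)}$ equals that of its tropicalization, hence is zero; being irreducible as the closure of the image of an irreducible variety, $\overline{\pi(U)}$ is a single $K$-point $t'' \in T''(K)$, and necessarily $\trop(t'') = 0$. Next I would lift $(t'')^{-1}$ to an element $t_0 \in T(K)$; then $\trop(t_0)$ lies in $\pi_*^{-1}(0) \cap N_\Lambda = N'_\Lambda$. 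Using surjectivity of $\trop \colon T'(K) \to N'_\Lambda$, I can multiply $t_0$ by a suitable element of $T'(K)$ to obtain $t \in T(K)$ with $\pi(t) = (t'')^{-1}$ and $\trop(t) = 0$. Then $t \cdot U \subset \ker(\pi) = T'$, and $\Trop(t \cdot U) = \Trop(U) + \trop(t) = \Trop(U)$, as required.

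The main subtlety is invoking the compatibility of tropicalization with pushforward along a torus homomorphism and the Bieri-Groves dimension formula in the possibly trivially valued setting, where tropicalizations are fans rather than polyhedral complexes; both ingredients hold in that generality but must be cited carefully. Note that in the trivially valued case the translation step simplifies, since $\trop$ is identically zero on $T(K)$, so \emph{any} lift of $(t'')^{-1}$ automatically satisfies $\Trop(t \cdot U) = \Trop(U)$; it is only in the non-trivially valued case that the further twist by an element of $T'(K)$ to normalize $\trop(t) = 0$ is actually needed.
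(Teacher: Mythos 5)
Your argument is correct and follows essentially the same route as the paper's proof: pass to the quotient torus $T/T'$, observe that the image of $U$ has tropicalization a point and hence is a point, and translate back by a lift whose tropicalization is normalized to $0$ (the paper phrases this as choosing $t$ with all entries of absolute value $1$). Your version merely makes two steps more explicit, namely the dimension argument via Bieri--Groves and the twist by an element of $T'(K)$ to preserve the tropicalization.
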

\begin{proof}
We consider the quotient torus $T / T'$. 
Denote by $\overline{U}$ the image of $U$ in the quotient torus $T / T'$. 
Then the tropicalization of $\overline{U}$ in $\Trop(T / T') = \Trop(T) / \Trop(T')$ 
is a point by construction, meaning that $U$ is contained in a translate $t \cdot T'$ of $T'$,
where all entries of $T$ have absolute value $1$. 
Thus $t^{-1} \cdot U$ is a translate of $U$ that is contained in $T'$ and has
the same tropicalization as $U$. 
\end{proof}

In the following, we view $\Ktilde$ as a non-archimedean field, carrying the trivial absolute value. 

\begin{satz} \label{duck theorem}
Let $T$ be an algebraic torus over $\Ktilde$. 
Let $U \subset T$ be a closed curve. 
If $\Trop(U)$ is smooth then $U$ is smooth and rational. 
\end{satz}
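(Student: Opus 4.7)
\textbf{Proof plan for Theorem \ref{duck theorem}.} Since the valuation on $\Ktilde$ is trivial, $\Trop(U)$ is a rational fan in $N_\R$; as $U$ is one-dimensional, so is this fan, and its unique vertex is the origin. Let $w_0, \dots, w_n$ be the primitive integral generators of the rays of $\Trop(U)$. Smoothness at the origin in the sense of Definition \ref{defn smooth} means exactly that every edge has weight $1$, balancing at the unique vertex forces $\sum_{i=0}^n w_i = 0$, and the $w_i$ span a saturated sublattice $N' \subseteq N$ of rank $n$. In particular $\Trop(U) \subseteq N'_\R$.

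First I would reduce to the case where $U$ spans the ambient torus. Let $T' \subseteq T$ be the subtorus with cocharacter lattice $N'$. By Proposition \ref{prop translate contained}, after replacing $U$ by a suitable translate (which does not change $\Trop(U)$), we may assume $U \subseteq T'$. Choosing a basis of $N'$ so that $w_1, \dots, w_n$ become the standard basis vectors (and hence $w_0 = -(w_1 + \dots + w_n)$), the fan $\Trop(U) \subseteq \R^n$ is exactly the \emph{standard tropical line}: the one-dimensional weighted fan with rays $\R_{\geq 0} e_1, \dots, \R_{\geq 0} e_n, \R_{\geq 0}(-e_1 - \dots - e_n)$, each carrying weight $1$. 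This is precisely the Bergman fan of the uniform matroid $U_{2,n+1}$.

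Next I would invoke the theorem of Katz and Payne (attributed to Mikhalkin and Ziegler): over a trivially valued field, a closed subvariety of an algebraic torus whose tropicalization coincides with the Bergman fan of a realizable matroid is itself a translate of the corresponding linear subspace. Since $U_{2,n+1}$ is realized by any generic line through the origin in $\A^{n+1}$, the hypothesis applies, and after a further translation in $T'$ the curve $U$ is the intersection of $T' \cong \G_m^n$ with an affine line $\ell \subseteq \A^n$ meeting every coordinate hyperplane transversally. Such an intersection is manifestly smooth and rational: $\ell \cong \A^1_{\Ktilde}$ is smooth and rational, and $U = \ell \cap \G_m^n$ is a nonempty Zariski open subset of $\ell$, hence also smooth and rational.

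The step I expect to be the main obstacle is the clean invocation of the Katz--Payne theorem, which is really the substance of the statement; everything else is just a packaging of smoothness into the statement that $\Trop(U)$ globally equals the Bergman fan of a uniform matroid of rank $2$. If one wished to bypass the citation, one could instead apply Remark \ref{lem initial smooth}: the initial degeneration at any ray direction is a rational curve (one verifies this by a direct Gr\"obner--basis calculation using the standard shape of the fan), and smoothness of all type II points on $X^w$ for $w$ ranging over $\Trop(U)$ then forces $U$ itself to be smooth and rational.
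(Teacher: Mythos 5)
Your proposal is correct and takes essentially the same route as the paper: reduce to the case where $\Trop(U)$ spans the ambient space by passing to the subtorus $T'$ whose cocharacter lattice is the (saturated) span of the rays, translate $U$ into $T'$ using Proposition \ref{prop translate contained}, and then invoke the Katz--Payne result \cite[Proposition 4.2]{KatzPayne}. The identification of the smooth fan with the Bergman fan of $U_{2,n+1}$ and the final verification that a line meets the torus in a smooth rational curve are details the paper leaves implicit.
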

\begin{proof}
In the case where $\Trop(U)$ spans $\Trop(T)$, it follows from \cite[Proposition 4.2]{KatzPayne}
that the closure of $U$ in $\mathbb{P}^n$ is a one-dimensional linear space.
Thus $U$ is a smooth rational curve. 
We reduce to this case: 
Let $V$ be the vector subspace of $\Trop(T)$ that is spanned by $\Trop(U)$.
Since $V$ is a rational subspace, there exists a subtorus $T'$ of $T$ such that $\Trop(T') = V$. 
Now replacing $U$ by the translate from Proposition \ref{prop translate contained}
and applying Katz's and Payne's result to $U$ and $T'$ proves the theorem. 
\end{proof}

\begin{kor} \label{duck corollary}
If $\Trop(X)$ is smooth, then $\inn_w(X)$ is a smooth rational curve for 
all $w \in \Trop(X) \cap N_\Lambda$.
\end{kor}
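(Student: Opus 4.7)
The plan is to reduce Corollary \ref{duck corollary} to Theorem \ref{duck theorem} applied to the initial degeneration. The key input is the standard compatibility between initial degenerations and tropicalizations: for any $w \in \Trop(X) \cap N_\Lambda$, the tropicalization of $\inn_w(X)$, viewed as a closed subvariety of the torus $T_{\Ktilde}$ over the trivially valued residue field, coincides with the local cone of $\Trop(X)$ at $w$. This is a well-known fact in tropical geometry (see, e.g., Maclagan--Sturmfels), and it is really what makes the notion of smoothness of a tropical variety at a point a local condition that can be checked via initial degenerations.

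First, I would fix $w \in \Trop(X) \cap N_\Lambda$ and identify the local cone of $\Trop(X)$ at $w$ with $\Trop(\inn_w(X)) \subset N_\R$. Next, I would invoke the assumption that $\Trop(X)$ is smooth: by Definition \ref{defn smooth}, this means that the local cone at $w$ is isomorphic (as a weighted fan) to the one-dimensional fan in $\R^{n}$ whose rays are spanned by $x_1, \dots, x_n$ and $-\sum x_i$ with all weights equal to $1$, i.e.\ it is the Bergman fan of a matroid. Consequently $\Trop(\inn_w(X))$ is a smooth tropical curve in the trivially valued setting.

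At this point, Theorem \ref{duck theorem}, applied to the closed curve $\inn_w(X)$ in the torus $T_{\Ktilde}$ over $\Ktilde$ (equipped with the trivial absolute value), immediately yields that $\inn_w(X)$ is smooth and rational. In particular it is reduced, so the hypothesis of Remark \ref{lem initial smooth} is satisfied and the conclusion of the corollary follows.

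The only nontrivial point is the identification of $\Trop(\inn_w(X))$ with the local cone of $\Trop(X)$ at $w$; this is the main step to cite carefully, since everything else is a direct appeal to Theorem \ref{duck theorem}. No new geometric content beyond Theorem \ref{duck theorem} and this compatibility is required.
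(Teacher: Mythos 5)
Your proof is correct and follows exactly the paper's argument: identify the local cone of $\Trop(X)$ at $w$ with $\Trop(\inn_w(X))$ (the paper cites \cite[10.15]{Gubler2} for this compatibility), observe that smoothness of $\Trop(X)$ makes this local cone a smooth tropical curve over the trivially valued residue field, and conclude via Theorem \ref{duck theorem}. No meaningful difference from the paper's proof.
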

\begin{proof}
Let $w \in \Trop(X) \cap N_{\Lambda}$. 
Then $\inn_w(X)$ is a closed subvariety of a torus $T_{\Ktilde}$ over $\Ktilde$. 
Denote by $\Trop(\inn_w(X))$ its tropicalization.  
Then the local cone at $w$ in $\Trop(X)$ equals $\Trop(\inn_w(X))$ by \cite[10.15]{Gubler2}.
Thus $\inn_w(X)$ is a smooth rational curve by Theorem \ref{duck theorem}. 
\end{proof}

\begin{satz} \label{main theorem II}
If $\Trop(X)$ is smooth, then $X$ is a Mumford curve. 
\end{satz}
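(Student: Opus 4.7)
The plan is to verify the genus criterion for Mumford curves from Remark \ref{bem Mumford}, namely that $g(x) = 0$ for every type II point $x \in \Xan$. The bridge to the tropical side is Corollary \ref{duck corollary}, which gives us smoothness and rationality of the initial degeneration at every $\Lambda$-rational point of $\Trop(X)$, combined with Remark \ref{lem initial smooth}, which translates this into information about the genera of type II points of the corresponding affinoid $X^w$.

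First, I would fix a type II point $x \in \Xan$ and set $w := \trop(x)$. The key preliminary observation is that $w$ lies in $N_\Lambda \cap \Trop(X)$. This is a standard consequence of Abhyankar's inequality: since $x$ is of type II, $\trdeg[\Htilde : \Ktilde] = 1$ already saturates the Abhyankar bound on $\trdeg[\Hscr(x) : K] \leq 1$, so the rank of $|\Hscr(x)^*|/|K^*|$ is zero, and $K$ being algebraically closed forces $|\Hscr(x)^*| = |K^*|$. In particular the coordinates of $x$ take values in $|K^*|$, so $w \in N_\Lambda$. Since $x \in \Xan$, also $w \in \Trop(X)$.

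Next, I invoke Corollary \ref{duck corollary}: the hypothesis that $\Trop(X)$ is smooth implies that $\inn_w(X)$ is a smooth rational curve. In particular $\inn_w(X)$ is reduced, so Remark \ref{lem initial smooth} applies. Now $x \in X^w = \trop^{-1}(w)$, and I need to conclude $g(x) = 0$. The reduction $\red(x)$ is either the generic point $\eta$ of the (irreducible) curve $\inn_w(X)$, in which case $C_x$ is birational to $\inn_w(X) \cong \Pbb^1_{\Ktilde}$ and hence has genus $0$; or $\red(x)$ is a (necessarily smooth) closed point $z$, in which case $x$ lies in the open disc $\red^{-1}(z)$, and all type II points on an open disc have genus $0$ (their residue fields are rational function fields $\Ktilde(T)$). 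Either way, $g(x) = 0$.

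Since $x$ was arbitrary, every type II point of $\Xan$ has genus $0$, and by Remark \ref{bem Mumford} this means $X$ is a Mumford curve. The only step that is not immediate from the results already assembled in the paper is the verification that $\trop(x) \in N_\Lambda$ for type II points; this is the main technical input, but it is a routine valuation-theoretic consequence of Abhyankar's inequality applied to the type II condition.
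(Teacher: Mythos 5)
Your proof is correct and follows essentially the same route as the paper: reduce to the genus-zero criterion of Remark \ref{bem Mumford}, apply Corollary \ref{duck corollary} and Remark \ref{lem initial smooth} at $w=\trop(x)$, and use that type II points tropicalize into $N_\Lambda$. The only difference is that you spell out the Abhyankar-inequality justification for $\trop(x)\in N_\Lambda$ and the reduction-map case analysis, both of which the paper asserts without detail.
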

\begin{proof}
Let $w \in \Trop(X) \cap N_\Lambda$. 
By Corollary \ref{duck corollary}, $\inn_w(X)$ is smooth and rational. 
Thus all type II points in $X^w$ have genus $0$ by Remark \ref{lem initial smooth}. 
Since all type two points map to $N_\Lambda$ under the tropicalization map, 
all type II points in $\Xan$ have genus zero which shows that $X$ is a Mumford curve by Remark \ref{bem Mumford}
\end{proof}

\begin{satz} \label{smooth implies ff}
If $\Trop(X)$ is smooth, then the tropicalization map is fully faithful.
\end{satz}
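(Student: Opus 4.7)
The plan is to verify the two conditions characterizing fully faithful tropicalizations given in Proposition \ref{prop defn fully faithful}: (i) injectivity of $\varphi_{\trop}|_{\Sigma(\varphi)}$, and (ii) every edge of $\Trop_{\varphi}(X)$ has weight one. Condition (ii) is immediate from Definition \ref{defn smooth}. Combined with positivity and integrality of stretching factors, (ii) already forces each edge of $\Trop_{\varphi}(X)$ to have a unique non-contracted preimage edge in $\Sigma(\varphi)$ with stretching factor one, on which $\varphi_{\trop}$ is an isometry. In particular, $\varphi_{\trop}$ is injective on interiors of non-contracted edges.

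The next step handles vertices. I will invoke Theorem \ref{main theorem II} to conclude that $X$ is a Mumford curve and Corollary \ref{duck corollary} to see that $\inn_w(X)$ is smooth and rational, hence irreducible and reduced, for every $w \in \Trop(X) \cap N_\Lambda$. Applying Remark \ref{lem initial smooth} to the canonical model $\XS^w$ then yields a unique type II point in $X^w$. Since every finite vertex of $\Sigma(\varphi)$ is a type II point tropicalizing into $N_\Lambda$, this gives an injection on finite vertices. For the infinite vertices, the smoothness condition at infinity forces valence one, so the unique lifting of the adjacent infinite edge of $\Trop_{\varphi}(X)$ from the previous paragraph provides a unique infinite preimage in $\Sigma(\varphi)$.

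The hard part will be ruling out contracted edges of $\Sigma(\varphi)$. By the uniqueness of the type II point in each $X^w$ and the injection on finite vertices, any contracted edge must be a loop at a single finite vertex $v$ with $w = \varphi_{\trop}(v)$. To exclude these, I will analyze the local picture at $v$: since $X$ is a Mumford curve, $C_v \cong \Pbb^1_{\Ktilde}$, and smoothness of $\inn_w(X)$ gives $\inn_w(X) = C_v \setminus \{q_1, \ldots, q_{n+1}\}$ with $n+1 = \val(w)$. By Remark \ref{lem initial smooth}, every closed point of $\inn_w(X)$ has open disc preimage contained in $X^w \subset (X^\circ)^{\an}$, so the only tangent directions at $v$ that can contribute half-edges to $\Sigma(\varphi)$ are those corresponding to the punctures $q_i$. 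These then match bijectively with the rays of the local cone of $\Trop_{\varphi}(X)$ at $w$, each ray carrying stretching factor one, so every half-edge at $v$ maps to a distinct non-zero direction and no edge at $v$ can be contracted. Once this is established, $\varphi_{\trop}|_{\Sigma(\varphi)}$ is a continuous bijection between compact Hausdorff spaces and hence a homeomorphism, yielding full faithfulness. The main obstacle is making the local correspondence between closed points of $C_v$, half-edges of $\Sigma(\varphi)$ at $v$, and rays of $\Trop_{\varphi}(X)$ at $w$ fully precise, using only the tools developed earlier in the paper.
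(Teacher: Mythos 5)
Your proposal is correct in substance and rests on exactly the same ingredients as the paper's proof (Theorem \ref{main theorem II}, Corollary \ref{duck corollary}, and Remark \ref{lem initial smooth}), but the paper organizes them more efficiently in a way that dissolves the ``hard part'' you flag. Two comments. First, the claim that Remark \ref{lem initial smooth} ``yields a unique type II point in $X^w$'' is not literally true: the residue discs $\red^{-1}(z)$ over closed points $z$ contain many type II points. What is true, and what you in fact use later, is that $x_w$ is the unique point of $X^w$ \emph{without} an open disc neighborhood contained in $(X^\circ)^{\an}$; by Definition \ref{defn minimal skeleton} this says precisely that $\Sigma(\varphi)\cap X^w=\{x_w\}$. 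Second, once you phrase it that way, this single observation applied to every $w\in N_\Lambda\cap\Trop(X)$ already shows that $\trop|_{\Sigma}$ is injective over a dense set of points, and since $\trop|_{\Sigma}$ is continuous and linear on each edge it is then bijective onto $\Trop(X)$ --- there is no need to treat edges, finite vertices, infinite vertices and contracted loops as separate cases, and no need to make precise the local correspondence between punctures of $\inn_w(X)$, half-edges at $v$, and rays of the local cone. Combined with all weights being $1$ (which, as you say, is immediate from smoothness), Proposition \ref{prop defn fully faithful} finishes the proof. So your argument is repairable as stated, but the global fiber-by-fiber formulation is both shorter and avoids the one step you yourself identify as incomplete.
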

\begin{proof}
By Corollary \ref{duck corollary}, all initial degenerations are smooth and rational.
For all $w \in N_{\Lambda} \cap \Trop(X)$, by Remark \ref{lem initial smooth}, 
there is a unique point $x_w \in X^w$ 
that satisfies that $\red(x_w)$ is the generic point of $\inn_w(X)$. 
Furthermore, every point in $X^w \setminus \{x_w\}$ has 
a neighborhood isomorphic to an open disc, thus is not contained in $\Sigma$. 
We conclude that every point $w \in N_\Lambda \cap \Trop(X)$ 
has $x_w$ as its unique preimage under $\trop|_{\Sigma}$. 
Since $\trop|_{\Sigma}$ is continuous and linear on each edge, 
this implies that $\trop|_{\Sigma} \colon \Sigma \to \Trop(X)$ is bijective.
Since all weights are $1$, this shows that the tropicalization map is fully faithful. 
\end{proof}

Note that when $X$ comes by base change from a family of Riemann surfaces over the punctured disc, 
Theorems \ref{main theorem II} and \ref{smooth implies ff} are consequences of \cite[Corollary 2]{IKMZ}. 
The relation between Hodge and Betti numbers in tropical geometry is different than in complex geometry. 
The $(0,1)$-tropical Hodge number of $\Trop(X)$ is equal to the first Betti number of $\Trop(X)$. 
Using this and \cite[Corollary 2]{IKMZ} one finds that the first Betti number of $\Trop(X)$ is equal to $g$, 
which, since $\Trop(X)$ is smooth, implies that $\trop|_{\Sigma}$ is injective, hence bijective.

\bibliographystyle{alpha}
\def\cprime{$'$}

\end{document}